\theoremstyle{plain}
\newtheorem{theorem}                {Theorem}      [section]
\newtheorem{proposition}  [theorem]  {Proposition}
\newtheorem{corollary}    [theorem]  {Corollary}
\theoremstyle{definition}
\newtheorem{remark}       [theorem]  {Remark}
\numberwithin{equation}{section}
\def \R{{\mathbb R}}
\def \s{{\mathbb S}}
\def \h{{\mathbb H}}
\newcommand{\Ric}{\operatorname{Ric}}
\newcommand{\Id}{\operatorname{Id}}
\newcommand{\Scal}{\operatorname{Scal}}
\def \1 {\`}
\DeclareMathOperator{\grad}{grad}
\DeclareMathOperator{\trace}{Tr}
\numberwithin{equation}{section}
\title[Conformal-biharmonic hypersurfaces]{
 Conformal-biharmonic hypersurfaces in spheres and product spaces}
\author{V.~Branding}
\address{ Institute of Mathematics\\
University of Rostock\\
Ulmenstraße 69, 18057 Rostock, Germany}
\email{volker.branding@uni-rostock.de}
\author{S.~Montaldo}
\address{Dipartimento di Matematica e Informatica\\
Universit\`a degli Studi di Cagliari\\
Via Ospedale 72\\
09124 Cagliari, Italy}
\email{montaldo@unica.it}
\author{S.~Nistor}
\address{Faculty of Mathematics\\ ``Al.I. Cuza'' University of Iasi\\
Bd. Carol I no. 11 \\
700506 Iasi, Romania}
\email{nistor.simona@ymail.com}
\author{C.~Oniciuc}
\address{Faculty of Mathematics\\ ``Al.I. Cuza'' University of Iasi\\
Bd. Carol I no. 11 \\
700506 Iasi, Romania}
\email{oniciucc@uaic.ro}
\author{A.~Ratto}
\address{Dipartimento di Matematica e Informatica\\
Universit\`a degli Studi di Cagliari\\
Via Ospedale 72\\
09124 Cagliari, Italy}
\email{rattoa@unica.it}
\begin{document}

\begin{abstract}

The conformal-bienergy functional $E_2^c$ is a modified version of the classical bienergy functional $E_2$ and it is conformally invariant in the case of a four-dimensional domain. 
The critical points of $E_2^c$ are called conformal-biharmonic and denoted $c$-biharmonic. In the first part of the paper we study the $c$-biharmonic hypersurfaces $M^m$ with constant principal curvatures in the product space $ {\mathbb L}^m(\varepsilon) \times \mathbb{R} $, where $ {\mathbb L}^m(\varepsilon) $ denotes a space form of constant sectional curvature $ \varepsilon $. Specifically, we demonstrate that $ M^m $ is either totally geodesic or a cylindrical hypersurface of the form $ M^{m-1} \times \mathbb{R} $, where $ M^{m-1} $ is an iso\-parametric $c$-biharmonic hypersurface in $ {\mathbb L}^m(\varepsilon) $. In the second part of this article we obtain a full description of isoparametric $c$-biharmonic hypersurfaces in $\s^{m+1}$ and a complete classification of $c$-biharmonic hypersurfaces with constant scalar curvature in $\s^{m+1}$, $m=2,3$ and $m=4$ with an additional assumption. In this context, we shall also prove a global result for compact $c$-biharmonic immersions in $\s^5$. In the final part of the paper, as a preliminary effort to understand $c$-biharmonic hypersurfaces in $ {\mathbb L}^m(\varepsilon) \times \mathbb{R} $ with \textit{non-constant} mean curvature, we establish that a totally umbilical  $c$-biharmonic hypersurface must necessarily be totally geodesic.
\end{abstract}

\subjclass[2010]{Primary: 58E20; Secondary: 53C42, 53C43.}
\keywords{$c$-biharmonic maps, product spaces, constant principal curvatures,  isoparametric hypersurfaces, totally geodesic hypersurfaces}

\thanks{The author V.B. gratefully acknowledges the support of the Austrian Science Fund (FWF)
through the project "Geometric Analysis of Biwave Maps" (DOI: 10.55776/P34853). The authors S.M. and A.R. are members of the Italian National Group G.N.S.A.G.A. of INdAM. The work was partially supported by the Project {PRoBIKI} funded by Fondazione di Sardegna.}

\maketitle
\section{Introduction}

For smooth maps  \(\phi:(M^m,g)\to(N^n,h)\) between two Riemannian manifolds we define the \emph{bienergy functional} by
\begin{align}
\label{bienergy}
E_2(\phi):=\frac{1}{2}\int_M|\tau(\phi)|^2 \ v_g\,,
\end{align}
where 
\begin{align*}
\tau(\phi):=\trace\bar\nabla d\phi, \qquad \tau(\phi)\in C\left(\phi^{-1}TN\right),
\end{align*}
represents the {\em tension field of the map} and \(\bar\nabla\) is the connection on the pull-back bundle \(\phi^{-1}TN\). Solutions of $\tau(\phi)=0$ are called {\em harmonic maps}, while the critical points of \eqref{bienergy} are called \emph{biharmonic maps} and can be expressed by the vanishing of the \emph{bitension field} \(\tau_2(\phi)\) as follows:
\begin{align}
\label{bitension}
0=\tau_2(\phi):=-\bar\Delta\tau(\phi)-\trace R^N(d\phi(\cdot), \tau(\phi))d\phi(\cdot)\,,
\end{align}
where $\bar\Delta$ is the rough Laplacian defined on sections of the pull-back bundle \(\phi^{-1}TN\) and $R^N$
is the curvature operator on $(N^n,h)$.
Biharmonic maps have received a lot of attention in recent years and for the current status of research on biharmonic maps we refer to the book \cite{MR4265170}, while for biharmonic hypersurfaces one may consult \cite{MR4410183}.

The bienergy functional \eqref{bienergy} is invariant under isometries of the domain.
However, regarding the invariance under conformal transformations, it turns out that the bienergy \eqref{bienergy} does not enjoy conformal invariance in any dimension. To repair this flaw, V.Bérard has considered in \cite{MR2449641,MR2722777} (see also \cite{MR3028564} for more details), for each even dimensional domain, a modified version of the energy functional called {\it conformal-energy}. The critical points of these energies, called \textit{conformal-harmonic maps}, are solutions of a nonlinear elliptic PDE of order equal to $m=\dim M$ and invariant under conformal changes of the metric on $M$.  When $m=4$ the B\'erard functional coincides with the bienergy \eqref{bienergy} with two additional terms as follows (see \cite{MR2449641}):
\begin{align}
\label{bienergy-conformal}
E^c_2(\phi):=\frac{1}{2}\int_M \left(|\tau(\phi)|^2+\frac{2}{3}\Scal |d\phi|^2-2\trace\langle d\phi\left(\Ric(\cdot)\right), d\phi(\cdot)\rangle\right) \ v_g\,,
\end{align}
where \(\Scal\) is the scalar curvature of \(M\) and \(\Ric\) is its Ricci tensor.

Consequently, this functional can be viewed as the natural higher-order generalization of the classical energy functional, which is conformally invariant when \(\dim M=2\). The structure of \eqref{bienergy-conformal} is inspired by the Paneitz operator, a fourth-order elliptic operator in conformal geometry. For details, see \cite[Chapter 4]{MR2521913} and the original work \cite{MR2393291}.

Following \cite{2501.08804, arXiv:2311.04493} we call \eqref{bienergy-conformal} the \emph{conformal-bienergy functional} and, although it is conformally invariant only when $\dim M=4$, we shall study its geometrical properties for any dimension of the domain.
The critical points of the conformal bienergy \eqref{bienergy-conformal} are those who satisfy (see  \cite{MR2722777} and also \cite{arXiv:2311.04493})
\begin{align}
\label{eq-$c$-biharmonic-intro}
0=\tau_2^c(\phi) & := \tau_2(\phi)-\frac{2}{3}\Scal\tau(\phi)+2\trace(\bar \nabla d\phi)(\Ric(\cdot),\cdot)+\frac{1}{3}d\phi(\nabla\Scal).
\end{align}
Solutions of \eqref{eq-$c$-biharmonic-intro} will be called \emph{conformal-biharmonc maps}, or simply 
\emph{$c$-biharmonic maps}.\\

In the present paper we are interested in \textit{$c$-biharmonic hypersurfaces}, that is \textit{isometric immersions} $\varphi: M^m\hookrightarrow (N^{m+1},h)$\textit{ which are solutions of} \eqref{eq-$c$-biharmonic-intro}. \\

When $(N^{m+1},h)$ is a space form ${\mathbb L}^{m+1}(\varepsilon)$ with constant sectional curvature $\varepsilon$ a study of $c$-biharmonic hypersurfaces with constant principal curvatures was initiated in \cite{arXiv:2311.04493}. In particular, they proved that in the case $\varepsilon=0$ (${\mathbb L}^{m+1}(0)=\R^{m+1}$) the only examples are the totally geodesic planes. When $\varepsilon=-1$ (${\mathbb L}^{m+1}(-1)={\mathbb H}^{m+1}$) they gave a detailed analysis of  $c$-biharmonic hypersurfaces with at most two distinct constant principal curvatures. It should be noted that in space forms a hypersurface has constant principal curvatures if and only if it is isoparametric. Moreover, if the ambient space is ${\mathbb H}^{m+1}$, a classical argument of Cartan \cite{MR1553310} shows that an isoparametric hypersurface has at most two distinct principal curvatures, thus the study in \cite{arXiv:2311.04493} covers all the isoparametric $c$-biharmonic  hypersurfaces in ${\mathbb H}^{m+1}$. As for the case of isoparametric hypersurfaces in ${\mathbb S}^{m+1}$, in \cite{arXiv:2311.04493} the authors gave a full description of those with $1$ or $2$ principal curvatures and an example with $4$ distinct principal curvatures.\\

When $(N^{m+1},h)$ is the product ${\mathbb L}^{m}(\varepsilon)\times \R$ a hypersurface with constant principal curvatures is not necessarily isoparametric. In fact it is known (see \cite{MR4032814}) that hypersurfaces with constant principal curvatures are isoparametric provided that the angle $\alpha$ between the normal to the hypersurface and the $\R$ direction of the ambient space is constant. Recently, in \cite{arXiv:2411.11506}, the authors proved that an isoparametric hypersurface in ${\mathbb L}^{m}(\varepsilon)\times \R$, $\varepsilon=\pm 1$, has constant angle. Thus we have that isoparametric hypersurfaces in ${\mathbb L}^{m}(\varepsilon)\times \R$, $\varepsilon=\pm 1$, are equivalent to hypersurfaces with constant principal curvatures and constant angle.\\

Our first contribution is the following theorem which characterizes $c$-biharmo\-nic hypersurfaces with constant principal curvatures in the product ${\mathbb L}^{m}(\varepsilon)\times \R$ without the assumption that the angle $\alpha$ is constant.

\begin{theorem}\label{theorem:cbiharmonicconstantcurvatures}
Let $M^m$ be a $c$-biharmonic hypersurface with constant principal curvatures in the product ${\mathbb L}^{m}(\varepsilon)\times \R$. Then $M^m$ is totally geodesic or $M^m=M^{m-1}\times\R$, where $M^{m-1}$ is an isoparametric $c$-biharmonic hypersurface in ${\mathbb L}^{m}(\varepsilon)$.
\end{theorem}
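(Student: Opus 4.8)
The plan is to analyze the $c$-biharmonic equation \eqref{eq-$c$-biharmonic-intro} for an isometric immersion $\varphi : M^m \hookrightarrow {\mathbb L}^m(\varepsilon) \times \mathbb{R}$ by splitting it into its tangential and normal components. Since $M^m$ has constant principal curvatures, its mean curvature function $H$ and the squared norm of its second fundamental form $|A|^2$ are constant; by the Gauss equation the scalar curvature $\Scal$ of $M^m$ is then determined by $H$, $|A|^2$, and curvature terms of the ambient product space. Because the ambient space ${\mathbb L}^m(\varepsilon) \times \mathbb{R}$ is not a space form, the curvature terms will involve the function $\langle T, \cdot\rangle$, where $T$ is the tangential part of the $\mathbb{R}$-direction $\partial_t$, equivalently they involve the angle function $\cos\alpha = \langle \partial_t, \nu\rangle$ with $\nu$ the unit normal. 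The first technical step will therefore be to assemble explicit formulas, for a hypersurface of ${\mathbb L}^m(\varepsilon)\times\mathbb{R}$, for $\Ric$, $\Scal$, $\nabla\Scal$, $\bar\Delta\tau(\varphi)$ and the curvature term $\trace R^N(d\varphi(\cdot),\tau(\varphi))d\varphi(\cdot)$, then feed them into $\tau_2^c(\varphi) = 0$.

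Next I would exploit that $\tau(\varphi) = mH\nu$ and that many quantities are constant, so several terms in \eqref{eq-$c$-biharmonic-intro} simplify drastically. The key observation is that the constancy of the principal curvatures forces $\nabla\Scal$ to be controlled entirely by $\nabla$ of the angle function (or of $|T|^2$), and the remaining geometric terms — in particular $\trace(\bar\nabla d\varphi)(\Ric(\cdot),\cdot)$ and the curvature term — also organize themselves into a multiple of $\nu$ plus a tangential part proportional to $T$. Projecting onto the normal bundle I expect to obtain a scalar equation that is essentially algebraic in $H$, $|A|^2$, $\varepsilon$, $m$ and the angle function $\cos^2\alpha$ (no derivatives, since everything relevant is constant except possibly $\alpha$); projecting onto $TM$ I expect an equation of the form (constant)$\cdot T = 0$. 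The tangential equation then yields a dichotomy: either the constant coefficient vanishes — which, after using the Gauss/Codazzi relations and the constancy hypotheses, should force the shape operator to commute appropriately and pin down the geometry — or $T \equiv 0$, i.e. $\partial_t$ is everywhere normal to $M$, which is impossible for a hypersurface (it would make $M$ an integral leaf of a nowhere-defined distribution) unless instead $\partial_t$ is everywhere tangent, i.e. the hypersurface is a cylinder $M^{m-1}\times\mathbb{R}$.

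From this point the argument should bifurcate cleanly. In the cylinder case $M^m = M^{m-1}\times\mathbb{R}$, the $\mathbb{R}$-factor is totally geodesic and flat, so the $c$-biharmonic equation for $M^m$ in ${\mathbb L}^m(\varepsilon)\times\mathbb{R}$ reduces, by a product/reduction computation, to the $c$-biharmonic equation for $M^{m-1}$ inside ${\mathbb L}^m(\varepsilon)$; together with constancy of principal curvatures (hence isoparametricity in the space form ${\mathbb L}^m(\varepsilon)$) this gives exactly the stated conclusion. In the non-cylinder case, the vanishing of the tangential coefficient plus the normal scalar equation should be shown to force $H = 0$ and then $|A|^2 = 0$, i.e. $M^m$ is totally geodesic; here one may also need to invoke that a totally geodesic hypersurface of ${\mathbb L}^m(\varepsilon)\times\mathbb{R}$ exists and is automatically $c$-biharmonic (indeed harmonic). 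The main obstacle I anticipate is bookkeeping in the first step: correctly computing the curvature-dependent terms of \eqref{eq-$c$-biharmonic-intro} for a hypersurface of a \emph{product} (not a space form), because the Ricci and curvature operators of ${\mathbb L}^m(\varepsilon)\times\mathbb{R}$ are degenerate in the $\mathbb{R}$-direction and mix with the angle function; a further delicate point is ruling out pathological configurations where the tangential coefficient vanishes without the hypersurface being totally geodesic, which will likely require a careful use of the Codazzi equation for hypersurfaces in product spaces relating $\nabla A$, $T$, and $\cos\alpha$.
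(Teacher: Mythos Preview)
Your overall strategy is right, but the plan misidentifies both the form of the tangential equation and the key ingredient needed to close the argument.

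First, the tangential component does \emph{not} reduce to $(\text{constant})\cdot T$. With $H$ and $|A|^2$ constant, what survives from \eqref{BHP1} is
\[
\left[\tau_2^c\right]^{\top}=-\tfrac{2}{3}\varepsilon(m-1)\cos\alpha\,\big(3mH\,T+2A(T)\big),
\]
so the shape operator acts on $T$. The correct trichotomy is therefore: (i) $\varepsilon=0$; (ii) $\cos\alpha\equiv 0$; or (iii) $T$ is a principal direction with eigenvalue $-\tfrac{3mH}{2}$. Your proposed dichotomy ``constant coefficient vanishes or $T\equiv 0$'' does not arise; and note also that $T\equiv 0$ means $\partial_t$ is \emph{normal} (so $M$ is a slice, hence totally geodesic), which is the opposite of the cylinder case $\cos\alpha\equiv 0$ where $\partial_t$ is \emph{tangent}.

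Second, and this is the genuine gap: in case (iii) the paper does not simply ``use the Codazzi equation carefully''. It invokes a structure theorem of Chaves--Santos \cite{MR4032814}: a hypersurface of ${\mathbb L}^m(\varepsilon)\times\R$ with constant principal curvatures, $T$ a principal direction, and $\cos\alpha\neq 0$ must have $\varepsilon=-1$, $A(T)=0$, and all remaining principal curvatures equal to $\pm b/\sqrt{1+b^2}$ for some constant $b$. This is a nontrivial classification result, not a routine Codazzi computation. From it one reads off $H=0$ (since $A(T)=0$ forces the eigenvalue $-\tfrac{3mH}{2}=0$), and then the normal equation collapses to $\trace A^3=0$, which forces $b=0$ and hence $A\equiv 0$. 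Without this external theorem your case (iii) is not closed; the ``pathological configurations'' you worry about are exactly what Chaves--Santos rules out. Case (i), incidentally, also requires a separate argument using the classical list of isoparametric hypersurfaces in $\R^{m+1}$.
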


Consequently, the complete understanding of $c$-biharmonic hypersurfaces with constant principal curvatures in the product space ${\mathbb L}^{m}(\varepsilon) \times \R$ reduces to a comprehensive study of $c$-biharmonic hypersurfaces with constant principal curvatures in the space form ${\mathbb L}^m(\varepsilon)$. As previously mentioned, $c$-biharmonic hypersurfaces with constant principal curvatures in ${\mathbb L}^m(\varepsilon)$ have been fully classified for $\varepsilon \in \{-1, 0\}$, whereas only partial results are available for $\varepsilon = 1$. 

For this reason, in Section~\ref{isoparametric-in-sm} we shall provide a complete study of isoparametric $c$-biharmonic  hypersurfaces in ${\mathbb S}^{m+1}$ considering all possible degrees (i.e., the number of distinct principal curvatures) $\ell = 1, 2, 3, 4, 6$. 

The study of Section~\ref{isoparametric-in-sm} will be used in Section~\ref{sec-new-remark} to prove the following three classification results for $c$-biharmonic immersions $\varphi: M^m \hookrightarrow \s^{m+1}$, $m=2,3,4$. These results are stated separately because the method of proof depends on $m$.

\begin{theorem}\label{teo2-m=2-remark}
Let $\varphi: M^2 \hookrightarrow \s^3$ be a $c$-biharmonic surface. If $M^2$ has constant scalar curvature then, up to isometries, it is an open subset of one of the following:
\begin{enumerate}
\item[(i)] the totally geodesic $\s^2$ or the small hypersphere $\s^2\left(\frac{1}{\sqrt{3}}\right)$
\item[(ii)] the minimal Clifford torus 
$$
\s^1\left(\frac{1}{\sqrt 2}\right) \times \s^1\left(\frac{1}{\sqrt 2}\right)\,.
$$
\end{enumerate}
\end{theorem}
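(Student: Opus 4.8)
The plan is to combine the structure theory of surfaces in $\s^3$ with the equation for $c$-biharmonicity reduced to the surface case. First I would recall that a surface $M^2$ in $\s^3$ has scalar curvature $\Scal = 2(1 + \det A)$, where $A$ is the shape operator (Gauss equation in the $3$-dimensional space form $\s^3$); hence the hypothesis that $\Scal$ is constant is equivalent to $\det A = $ constant, i.e.\ the Gauss curvature $K$ is constant. With $H$ the mean curvature and $f = 2H$ the trace of $A$, the two principal curvatures $k_1, k_2$ satisfy $k_1 + k_2 = f$ and $k_1 k_2 = \text{const}$. I would then split into the case where $M^2$ has constant mean curvature and the case where it does not.

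In the constant-mean-curvature case, $k_1$ and $k_2$ are both constant, so $M^2$ is an isoparametric surface in $\s^3$; by the classification of isoparametric hypersurfaces in spheres (degree $\ell = 1$ or $2$ here, since $m=2$) it is, up to isometry, an open subset of a totally geodesic $\s^2$, a small hypersphere $\s^2(r)$, or a Clifford torus $\s^1(r_1)\times\s^1(r_2)$. Among these, the ones which are $c$-biharmonic are exactly those identified in the study of isoparametric $c$-biharmonic hypersurfaces in $\s^{m+1}$ from Section~\ref{isoparametric-in-sm} (applied with $m=2$): this selects the totally geodesic $\s^2$, the small hypersphere $\s^2(1/\sqrt3)$, and the minimal Clifford torus $\s^1(1/\sqrt2)\times\s^1(1/\sqrt2)$ — precisely cases (i) and (ii). So in this branch the theorem follows by simply quoting the earlier classification.

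The substantive part is to rule out the non-constant-mean-curvature case. Here $M^2$ has constant Gauss curvature $K$ but non-constant $H$, hence two distinct principal curvatures everywhere on an open dense set, and a local orthonormal frame $\{e_1, e_2\}$ of principal directions. I would write out the $c$-biharmonic equation \eqref{eq-$c$-biharmonic-intro} for the immersion $\varphi:M^2\hookrightarrow\s^3$: since $\dim M = 2$, the scalar curvature $\Scal = 2K$ is constant, the term $d\varphi(\nabla\Scal)$ vanishes, and $\Ric = K\,\Id$, so the correction terms collapse and the $c$-biharmonic equation becomes the biharmonic equation with shifted coefficients — effectively $\tau_2(\varphi) + (2K/3 \cdot(\text{sign/scaling}))\,\tau(\varphi) + \dots = 0$ with the Ricci term contributing $2K\,\tau(\varphi)$. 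Decomposing this into its tangential and normal parts (as is standard for biharmonic hypersurfaces, cf.\ \cite{MR4410183}), the normal part gives $\Delta H + H(|A|^2 - 2) - cH = 0$ for an explicit constant $c$ depending on $K$, and the tangential part gives $2A(\nabla H) + H\nabla H = 0$ (again up to constants coming from the conformal correction). The tangential equation forces $\nabla H$ to be a principal direction with principal curvature $-H/2$ wherever $\nabla H \neq 0$; combined with $k_1 + k_2 = 2H$ this pins down both principal curvatures as functions of $H$, and then the Codazzi equations plus the constancy of $K = k_1 k_2$ yield a contradiction with $H$ non-constant. I expect this last ODE/Codazzi analysis to be the main obstacle: one must carefully exploit the second structure equations ($e_1(k_2) = (k_1-k_2)\omega_{12}(e_2)$, etc.) together with $k_1 k_2 = $ const to derive an overdetermined system, show it has no solution with $dH\ne0$, and handle the possibility that the open set where $dH\ne0$ is not all of $M$. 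Once the non-CMC case is excluded, the theorem is proved.
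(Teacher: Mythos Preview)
Your overall strategy matches the paper's: show that constant scalar curvature plus $c$-biharmonicity forces $H$ constant (hence $M^2$ isoparametric, since $k_1+k_2$ and $k_1k_2$ are then both constant), and then invoke the classification of isoparametric $c$-biharmonic surfaces in $\s^3$ from \cite{arXiv:2311.04493}. The only difference lies in how the non-CMC case is excluded. The paper first observes that when $\Scal$ is constant the tangential part of $\tau_2^c$ coincides with that of $\tau_2$ (all the conformal correction terms in \eqref{eq-$c$-biharmonic-intro} are either normal or vanish), so $M^2$ is \emph{biconservative}; it then simply quotes \cite[Theorem~3.1]{MR3180932}, which says that a non-CMC biconservative surface in $\s^3$ has $K = 1 - 3H^2$, an immediate contradiction with $K$ constant.

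Your direct argument is in fact a rederivation of that cited formula, and it is much shorter than you anticipate --- there is no ``main obstacle''. Two corrections. First, because $\Scal$ is constant the conformal terms contribute nothing tangentially, so the tangential equation is exactly the biconservative one $A(\grad H)=-\tfrac{m}{2}H\,\grad H$; for $m=2$ this gives $A(\grad H)=-H\,\grad H$, so the principal curvature along $\grad H$ is $-H$, not $-H/2$. Second, no Codazzi or ODE analysis is needed at all: with $k_1=-H$ and $k_1+k_2=2H$ you get $k_2=3H$, whence the Gauss equation gives $K=1+k_1k_2=1-3H^2$, and constancy of $K$ forces $H$ constant on the spot.
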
  

\begin{theorem}\label{teo2-remark}
Let $\varphi: M^3 \hookrightarrow \s^4$ be a $c$-biharmonic hypersurface. If $M^3$ has constant scalar curvature then, up to isometries, it is an open subset of one of the following:
\begin{enumerate}
\item[(i)] the totally geodesic $\s^3$ or the small hypersphere $\s^3\left(\frac{1}{\sqrt{2}}\right)$
\item[(ii)] the extrinsic product 
$$
\s^1\left(r_1\right) \times \s^2\left(r_2\right)\,,\quad r_1^2=\frac{T^*}{1+T^*}\,,\; r_2^2=\frac{1}{1+T^*}\,,
$$
where $T^*$ is the unique positive root of the polynomial 
$$
8  T^3-32  T^2+11  T-3
$$
\item[(iii)] the minimal isoparametric hypersurface of degree $3$.
\end{enumerate}
\end{theorem}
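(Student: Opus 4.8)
\emph{Sketch of the argument.} The plan is to prove that a $c$-biharmonic hypersurface $\varphi:M^3\hookrightarrow\s^4$ with constant scalar curvature is necessarily isoparametric, and then to read off the list from the classification of isoparametric $c$-biharmonic hypersurfaces of $\s^{m+1}$ carried out in Section~\ref{isoparametric-in-sm}. Indeed, an isoparametric hypersurface of $\s^4$ has $\ell\in\{1,2,3\}$ distinct principal curvatures, and the $c$-biharmonic ones turn out to be exactly the totally geodesic $\s^3$ and $\s^3(1/\sqrt2)$ (for $\ell=1$), the products $\s^1(r_1)\times\s^2(r_2)$ with $r_1,r_2$ as in (ii) (for $\ell=2$), and Cartan's minimal isoparametric hypersurface of degree $3$ (for $\ell=3$), which are precisely the hypersurfaces in (i)--(iii).

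First I would record the $c$-biharmonic system of an oriented hypersurface $\varphi:M^3\hookrightarrow\s^4$, with unit normal $\eta$, shape operator $A$ and mean curvature $H=\tfrac13\trace A$, by splitting $\tau_2^c(\varphi)=0$ into its tangential and normal parts. Since $\bar\nabla d\varphi$ is the (normal-valued) second fundamental form, the correction term $2\trace(\bar\nabla d\varphi)(\Ric(\cdot),\cdot)$ is normal, and by the Gauss equation $\Ric^\sharp=2\,\Id+3HA-A^2$ it equals $2\bigl(6H+3H|A|^2-\trace(A^3)\bigr)\eta$; the term $-\tfrac23\Scal\,\tau(\varphi)$ is also normal, while $\tfrac13 d\varphi(\nabla\Scal)$ is tangential. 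Under the hypothesis that $\Scal$ be constant --- equivalently, by Gauss, that $9H^2-|A|^2$ be constant --- the tangential equation reduces to the classical relation
\[
2A(\grad H)+3H\grad H=0,
\]
while the normal equation takes the form (up to the sign convention for $\Delta$)
\[
3\Delta H+\bigl(9+5|A|^2-18H^2\bigr)H-2\trace(A^3)=0.
\]
The decisive point, which has no counterpart for ordinary biharmonic hypersurfaces, is that $\trace(A^3)$ enters the normal equation with a nonzero \emph{constant} coefficient.

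Next, since solutions of the fourth-order elliptic system $\tau_2^c(\varphi)=0$ are real-analytic, either $H$ is constant on $M^3$, or $\grad H\neq0$ on an open dense subset. If $H$ is constant, then $|A|^2$ is constant too, the term $\Delta H$ disappears, and the normal equation forces $\trace(A^3)$ to be constant; hence the power sums $\trace A$, $\trace A^2$, $\trace A^3$ are all constant, so by Newton's identities the characteristic polynomial of $A$ has constant coefficients, $M^3$ has constant principal curvatures, and it is isoparametric. If instead $\grad H\neq0$ on an open set $U$, the tangential equation shows that $E_1:=\grad H/|\grad H|$ is, on $U$, a principal direction with principal curvature $-\tfrac32 H$, while the other two principal curvatures satisfy $\lambda_2+\lambda_3=\tfrac92 H$ and $\lambda_2^2+\lambda_3^2=\tfrac{27}{4}H^2-\Scal+6$. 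One then runs the Codazzi analysis for a hypersurface with a distinguished gradient principal direction, treating separately the loci with three and with two distinct principal curvatures: the Codazzi equations express the connection coefficients $\langle\nabla_{E_i}E_j,E_k\rangle$ through the $E_1$-derivatives of the $\lambda_i$, which by the relations above and the normal equation are all controlled along the integral curves of $E_1$; differentiating these relations once more and invoking the Gauss and Ricci identities, one aims to reach $\grad H=0$ on $U$, a contradiction. \textbf{This is the main obstacle}: it is precisely the analogue, in $\s^4$ and under the constant-scalar-curvature hypothesis, of excluding hypersurfaces with nonconstant mean curvature, which for $m=4$ can only be done under an additional assumption.

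Finally, knowing that $M^3$ is isoparametric, the proof is completed by the results of Section~\ref{isoparametric-in-sm}: imposing $c$-biharmonicity on the umbilical hypersurfaces $\s^3(a)$ gives $a=1$ or $a=1/\sqrt2$; on the one-parameter family $\s^1(r_1)\times\s^2(r_2)$, $r_1^2+r_2^2=1$, the normal equation reads, after setting $T=r_1^2/r_2^2$, as $8T^3-32T^2+11T-3=0$, whose unique positive root $T^*$ yields $r_1^2=T^*/(1+T^*)$ and $r_2^2=1/(1+T^*)$; and on the degree-$3$ isoparametric family the only $c$-biharmonic member is the minimal one. This gives the stated list (i)--(iii).
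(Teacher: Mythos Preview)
Your overall strategy matches the paper's: reduce to the isoparametric case and then read off the list from the classification of isoparametric $c$-biharmonic hypersurfaces of $\s^{m+1}$. The CMC branch (Newton's identities on $\trace A$, $\trace A^2$, $\trace A^3$) and the final identification of (i)--(iii) are exactly what the paper does.

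The genuine gap is precisely where you flag it: the exclusion of the non-CMC case. You correctly identify that, under the constant-$\Scal$ assumption, the tangential equation reduces to the biconservative condition $2A(\grad H)+3H\grad H=0$, and you set up the Codazzi analysis on $\{\grad H\neq0\}$, but you do not close it. The paper does not attempt this computation either; instead it observes (as you do) that constant $\Scal$ forces $[\tau_2(\varphi)]^\top=0$, i.e.\ $M^3$ is biconservative, and then quotes a recent theorem of Fu--Hong--Yang--Zhan (\emph{Ann.\ Mat.\ Pura Appl.}\ 204 (2025), Theorem~1.1): a biconservative hypersurface $M^3\hookrightarrow\mathbb{L}^4(\epsilon)$ with constant scalar curvature is CMC. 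That external result is exactly the missing ingredient absorbing your ``main obstacle''; without it, or a self-contained replacement, the proof is incomplete.

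Two smaller remarks. First, once you go through the biconservative route, the analyticity argument is superfluous: the Fu et al.\ result gives CMC globally. Second, even in a direct approach analyticity is not needed for the dichotomy --- a contradiction on any open subset of $\{\grad H\neq0\}$ already forces $\grad H\equiv0$ by continuity.
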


\begin{theorem}\label{teo2-m=4-remark}
Let $\varphi: M^4 \hookrightarrow \s^5$ be a $c$-biharmonic hypersurface. If $M^4$ has constant scalar curvature and at most three distinct principal curvatures at any point then, up to isometries, it is an open subset of one of the following:
\begin{enumerate}
\item[(i)] the totally geodesic $\s^4$ or the small hypersphere $\s^4\left(\frac{\sqrt 3}{2}\right)$
\item[(ii)] the extrinsic product 
$$
\s^2\left(r_1\right) \times \s^2\left(r_2\right)
$$
where either
$$
r_1^2=r_2^2=\frac{1}{2}\quad {\rm or}\quad r_1^2=\frac{1}{2}\left (1-\frac{1}{\sqrt 3} \right )\quad {\rm and}\quad r_2^2=\frac{1}{2}\left (1+\frac{1}{\sqrt 3} \right )\,;
$$
the extrinsic product 
$$
\s^1\left(r_1\right) \times \s^3\left(r_2\right)\,,\quad r_1^2=\frac{T^*}{1+T^*}\,,\quad r_2^2=\frac{1}{1+T^*}\,,
$$
where $T^*$ is the unique positive root of the third order polynomial
\begin{equation}\label{eq:P3(T)}
P_3(T)=9 T^3-19  T^2+3 T-1\,.
\end{equation}
\end{enumerate}
\end{theorem}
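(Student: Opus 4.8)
The plan is to reduce the statement to the classification of isoparametric $c$-biharmonic hypersurfaces of $\s^5$ established in Section~\ref{isoparametric-in-sm}; the strategy parallels the case $M^3\hookrightarrow\s^4$, with the hypothesis of at most three distinct principal curvatures playing the role that the dimensional bound plays automatically when $m=3$. First I would write the $c$-biharmonic system for $\varphi:M^4\hookrightarrow\s^5$ in terms of the mean curvature function $f$ and the shape operator $A$. Since $\Scal$ is constant, the term $\frac{1}{3} d\varphi(\nabla\Scal)$ in the $c$-biharmonic equation vanishes, and since $\bar\nabla d\varphi$ is normal-valued the term $2\,\trace(\bar\nabla d\varphi)(\Ric(\cdot),\cdot)$ is purely normal and equals $2\,\trace(A\circ\Ric)\,\eta$; using the Gauss equations $\Ric=3\,\Id+4fA-A^2$ and $\Scal=12+16f^2-|A|^2$ one obtains a tangential equation $A(\nabla f)=-2f\nabla f$ together with a scalar normal equation of the form $\Delta f=\Phi(f,|A|^2,\trace A^3)$, while $|A|^2$ is an affine function of $f^2$ because $\Scal$ is constant.

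The central step is to show that $f$ is constant. Assume not; on an open set $U$ where $\nabla f\neq0$ the tangential equation says that $e_1:=\nabla f/|\nabla f|$ is a principal direction with principal curvature $\lambda_1=-2f$. Denoting by $\lambda_2,\lambda_3,\lambda_4$ the remaining principal curvatures — at most two further distinct values, by hypothesis — the identities $\sum_i\lambda_i=4f$ and $\sum_i\lambda_i^2=|A|^2$ express $\lambda_2+\lambda_3+\lambda_4$ and $\lambda_2^2+\lambda_3^2+\lambda_4^2$ as explicit functions of $f$, so the normal equation becomes a relation involving only $f$ and its derivatives. I would then differentiate these identities along $e_1$, use the Codazzi equation to control the derivatives $e_1(\lambda_i)$ and the connection coefficients $\langle\nabla_{e_i}e_1,e_j\rangle$, and integrate along the integral curves of $e_1$ to reach a contradiction. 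I expect this to be the main obstacle: it is a case analysis depending on the multiplicity pattern of $\{\lambda_2,\lambda_3,\lambda_4\}$ and on whether any of these curvatures equals $-2f$, and it is precisely here that the bound of three distinct principal curvatures is used.

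Once $f$ is constant, $M^4$ has constant mean curvature, $|A|^2$ is constant by the Gauss equation, and the normal equation forces $\trace A^3$ to be constant. With at most three distinct principal curvatures, the constancy of $\trace A$, $\trace A^2$ and $\trace A^3$ determines the distinct values; moreover the characteristic polynomial of $A$ then has a repeated root at every point, so $\det A$ satisfies a fixed algebraic equation and is constant too, whence all principal curvatures are constant and $M^4$ is isoparametric. A connected isoparametric hypersurface of $\s^5$ has $g\in\{1,2,3,4,6\}$ distinct principal curvatures; since, by Cartan's classification, isoparametric hypersurfaces with exactly three distinct principal curvatures live in $\s^{3\delta+1}$ with $\delta\in\{1,2,4,8\}$, and hence not in $\s^5$, the assumption $g\le3$ forces $g=1$ (totally umbilical: the totally geodesic $\s^4$ or a small hypersphere $\s^4(r)$) or $g=2$ (a standard product $\s^1(r_1)\times\s^3(r_2)$ or $\s^2(r_1)\times\s^2(r_2)$, $r_1^2+r_2^2=1$). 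Inserting these families into the classification of isoparametric $c$-biharmonic hypersurfaces from Section~\ref{isoparametric-in-sm} yields exactly the hypersurfaces listed in (i) and (ii).
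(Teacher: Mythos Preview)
Your overall architecture matches the paper's: reduce to isoparametric and then read off the answer from the analysis of Section~\ref{isoparametric-in-sm}. The last two paragraphs are fine; in particular, your discriminant argument (constant $\trace A,\trace A^2,\trace A^3$ plus identically vanishing discriminant forces $\det A$ into a finite set, hence constant by continuity) is a clean elementary substitute for the paper's citation of \cite{MR4088868,MR4812338}, and the exclusion of $\ell=3$ via $m=3m_1$ with $m_1\in\{1,2,4,8\}$ is exactly what the paper does.

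The genuine gap is the middle paragraph, where you only \emph{sketch} the proof that $f$ is constant and explicitly flag it as ``the main obstacle''. This is not a routine fill-in. The tangential equation $A(\nabla f)=-2f\nabla f$ is precisely biconservativity, and biconservativity together with constant $\Scal$ and at most three principal curvatures does \emph{not} force CMC in $\s^5$: Fu, Hong, Yang and Zhan \cite[Theorem~1.4]{MR4903596} show that besides CMC there is one explicit non-CMC rotational family with $-k_1=k_2=k_3=k_4$ and $\Scal=12$. So a Codazzi case analysis along your lines will, in the multiplicity pattern $(1,3)$, not produce a contradiction from the tangential equation alone; it will rediscover this family. (Relatedly, your claim that ``the normal equation becomes a relation involving only $f$ and its derivatives'' is not automatic: $\trace A^3$ still depends on $\lambda_2\lambda_3\lambda_4$, which is only pinned down once you fix the multiplicity pattern among $\lambda_2,\lambda_3,\lambda_4$.) The paper's route, via Theorem~\ref{teo1-remark}, is to invoke \cite{MR4903596} to isolate this exceptional rotational family and then compute $\tau_2^c$ on it directly: using the profile ODE $h_1h_1''+h_1'^2+2h_1^2-1=0$ one finds $\tau_2^c(\varphi)=16H\,\eta\neq0$, so the family is not $c$-biharmonic and CMC follows. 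Your proposal gives no indication of where or how the normal equation would enter decisively to eliminate this family, and without that the argument is incomplete.
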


\begin{remark} In Theorem~\ref{th-isop-minimal} we shall prove the existence of a minimal, isoparametric $c$-biharmonic hypersurface $\varphi: M^4 \hookrightarrow \s^5$ of degree $\ell=4$, i.e., with four distinct principal curvatures. 
\end{remark}

Theorems~\ref{teo2-m=2-remark}, \ref{teo2-remark} and \ref{teo2-m=4-remark} have a \textit{local} nature. An interesting feature of the $c$-biharmonic tension field is the fact that in the case of hypersurfaces it contains the term ${\rm Tr}A^3$. Combining this fact with a recent result of Z. Tang and W. Yan (see \cite{MR4529031}) we shall prove the following \textit{global} property:
\begin{theorem}\label{teo-global}
Let $\varphi: M^4 \hookrightarrow \s^5$ be a $c$-biharmonic hypersurface. Assume that $M^4$ is compact, CMC and of nonnegative constant scalar curvature. Then, up to isometries, $\varphi: M^4 \hookrightarrow \s^5$ is either one of the examples listed in Theorem~\ref{teo2-m=4-remark} or the minimal isoparametric hypersurface of degree $4$. 
\end{theorem}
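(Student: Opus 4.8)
The plan is to exploit the feature highlighted in the Introduction --- that the $c$-biharmonic tension field of a hypersurface contains the term $\trace A^3$ --- and then to feed the resulting information into the rigidity theorem of Tang and Yan \cite{MR4529031} and into the classification of isoparametric $c$-biharmonic hypersurfaces of $\s^{m+1}$ obtained in Section~\ref{isoparametric-in-sm} (together with Theorem~\ref{teo2-m=4-remark}).

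First I would specialise \eqref{eq-$c$-biharmonic-intro} to an isometric immersion $\varphi:M^4\hookrightarrow\s^5$ ($m=4$), with unit normal $\eta$, shape operator $A$ and mean curvature $mf=\trace A$, which is constant by hypothesis; then $\tau(\varphi)=mf\,\eta$ and $(\bar\nabla d\varphi)(X,Y)=\langle AX,Y\rangle\,\eta$. The three correction terms in \eqref{eq-$c$-biharmonic-intro} behave as follows: $-\tfrac23\Scal\,\tau(\varphi)$ and $2\,\trace(\bar\nabla d\varphi)(\Ric(\cdot),\cdot)=2\,\trace(A\circ\Ric)\,\eta$ are normal, while $\tfrac13 d\varphi(\nabla\Scal)$ vanishes since $M^4$ has constant scalar curvature; as the tangential part of $\tau_2(\varphi)$ is a multiple of $2A(\grad f)+\tfrac m2\grad(f^2)$, which vanishes under CMC, the equation $\tau_2^c(\varphi)=0$ reduces to a single scalar identity, namely its normal component. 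Using the Gauss relations $\Ric=(m-1)\Id+(\trace A)A-A^2$ and $\Scal=m(m-1)+(\trace A)^2-|A|^2$, I would note that constancy of $\Scal$ together with CMC forces $|A|^2$ to be constant, and that $\trace(A\circ\Ric)=(m-1)\trace A+(\trace A)|A|^2-\trace A^3$. Hence, in that normal equation, every term is manifestly constant except the summand $-2\,\trace A^3$ coming from $\trace(A\circ\Ric)$, whose coefficient is nonzero and independent of $f$; therefore $\trace A^3$ is constant on $M^4$ (and equals $0$ in the minimal case).

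Next, $M^4$ being a compact hypersurface of $\s^5$ for which $\trace A$, $\trace A^2=|A|^2$ and $\trace A^3$ are all constant and whose scalar curvature is a nonnegative constant, I would invoke the theorem of Tang and Yan \cite{MR4529031} to conclude that $M^4$ is isoparametric in $\s^5$, and then run through the admissible degrees. An isoparametric hypersurface of $\s^{n+1}$ has $\ell\in\{1,2,3,4,6\}$ distinct principal curvatures, and for $n=4$ the values $\ell=3$ and $\ell=6$ cannot occur, since $4$ is a multiple of neither $3$ nor $6$; hence $\ell\in\{1,2,4\}$. If $\ell\in\{1,2\}$ then $M^4$ has constant scalar curvature and at most three distinct principal curvatures, so Theorem~\ref{teo2-m=4-remark} applies and, since $M^4$ is compact, identifies it up to isometries with one of the models listed there. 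If $\ell=4$ then $M^4$ belongs to the one-parameter isoparametric family of multiplicities $(1,1)$ in $\s^5$; by the analysis of this family in Section~\ref{isoparametric-in-sm} its only member with nonnegative scalar curvature is the minimal one (for which $\Scal=m(m-1)-|A|^2=12-12=0$), and by Theorem~\ref{th-isop-minimal} that minimal hypersurface is $c$-biharmonic, so $M^4$ must coincide with it. Collecting the three cases yields exactly the list in the statement.

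The substantial analytic work --- passing from ``$\trace A,\trace A^2,\trace A^3$ constant, $M^4$ compact, $\Scal$ a nonnegative constant'' to ``$M^4$ isoparametric'' --- is supplied by Tang--Yan's theorem, and it is precisely there that compactness and the sign of the scalar curvature enter (they are irrelevant for the local Theorems~\ref{teo2-m=2-remark}--\ref{teo2-m=4-remark}). Within the present argument the only genuinely delicate point is therefore the first step: one must check that the combined hypotheses CMC $+$ constant scalar curvature really collapse the fourth-order $c$-biharmonic system to the single scalar statement that ``$\trace A^3$ is constant'', that the coefficient of $\trace A^3$ there is nonzero, and that no further constraint is hidden in the system. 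This is short but sign-sensitive, and I would carry it out by a direct computation from \eqref{eq-$c$-biharmonic-intro} and the Gauss equation as sketched above.
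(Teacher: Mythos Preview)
Your approach coincides with the paper's: deduce from CMC and constant $\Scal$ that $|A|^2$ is constant, read off from the normal $c$-biharmonic equation that $\trace A^3$ is constant, apply Tang--Yan \cite{MR4529031} to obtain isoparametricity, and finish via the classification in Section~\ref{isoparametric-in-sm}. The paper additionally checks, case by case, that every hypersurface in the final list satisfies $\Scal\geq 0$, thereby confirming the list is sharp; you omit this, which is harmless for the bare implication.

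One point in your handling of $\ell=4$ needs correction. You write that ``its only member with nonnegative scalar curvature is the minimal one'', but this is false: \emph{every} member of the degree-$4$ isoparametric family with $m_1=m_2=1$ in $\s^5$ has $\Scal=0$. Indeed, a direct computation with \eqref{principal-curvatures} gives $|A|^2=16H^2+12$ for all $s\in(0,\pi/4)$, whence $\Scal=12+16H^2-|A|^2\equiv 0$. So the scalar-curvature hypothesis does not single out the minimal hypersurface. What does is the $c$-biharmonicity condition itself: the analysis of the subcase $\ell=4$, $m_1=m_2$ in Section~\ref{isoparametric-in-sm} shows that the minimal member is the \emph{only} $c$-biharmonic one in that family. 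With this corrected justification your argument goes through unchanged.
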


Finally, as an initial step towards understanding $c$-biharmonic hypersurfaces in \( {\mathbb L}^m(\varepsilon) \times \mathbb{R} \) with non-constant principal curvatures, we examine \textit{totally umbilical} hypersurfaces in \( {\mathbb L}^m(\varepsilon) \times \mathbb{R} \). These are hypersurfaces characterized by having a single principal curvature, denoted \( H \), with multiplicity \( m \). In the case of hypersurfaces in a space form, it is straightforward to show that totally umbilical hypersurfaces must have constant \( H \). By contrast,  the mean curvature $H$ of a totally umbilical hypersurface in \( {\mathbb L}^m(\varepsilon) \times \mathbb{R} \)  is not necessarily  a constant (see, for instance, \cite{MR4493653}).  
For this class of hypersurfaces, we obtain the following result:

\begin{theorem}\label{teo-rigidity-totally-geodesic}
Let $M^m\hookrightarrow {\mathbb L}^m(\varepsilon)\times \R$ be a totally umbilical $c$-biharmonic hypersurface. Then $M^m$ is totally geodesic. 
\end{theorem}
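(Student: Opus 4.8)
The plan is to compute the $c$-biharmonic equation \eqref{eq-$c$-biharmonic-intro} explicitly for a totally umbilical hypersurface $M^m\hookrightarrow {\mathbb L}^m(\varepsilon)\times\R$ with shape operator $A=H\,\Id$, and then to show that the only solution is $H\equiv 0$, i.e. $M^m$ totally geodesic. First I would set up notation: let $\xi$ be a unit normal, let $T$ denote the tangential part of the $\R$-direction $\partial_t$, and let $\alpha$ be the angle between $\xi$ and $\partial_t$, so that $\partial_t = T + \cos\alpha\,\xi$ with $|T|=\sin\alpha$. For the product ${\mathbb L}^m(\varepsilon)\times\R$ the ambient curvature tensor is explicit, so the Gauss equation gives $\Ric$ and $\Scal$ of $M$ in terms of $H$, $T$ and $\alpha$; in particular, since $A=H\,\Id$, the tension field is $\tau(\varphi)=mH\,\xi$ and the classical bitension field $\tau_2(\varphi)$ splits into a normal part involving $\Delta H$, $|A|^2=mH^2$, ${\rm Tr}\,A^3=mH^3$ and the relevant sectional curvatures, plus a tangential part. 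The extra $c$-biharmonic terms $-\tfrac23\Scal\,\tau(\varphi)$, $2\,{\rm tr}(\bar\nabla d\varphi)(\Ric(\cdot),\cdot)$ and $\tfrac13 d\varphi(\nabla\Scal)$ must be added; because the second fundamental form is $H\langle\cdot,\cdot\rangle\xi$, the term $2\,{\rm tr}(\bar\nabla d\varphi)(\Ric(\cdot),\cdot)$ contributes $2H\,({\rm tr}\,\Ric)\,\xi = 2H\Scal\,\xi$ in the normal direction (up to the appropriate sign and lower-order tangential corrections coming from $\nabla\xi$).

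The key steps, in order, are: (1) record the Gauss and Codazzi equations for a hypersurface in ${\mathbb L}^m(\varepsilon)\times\R$ and specialize to $A=H\,\Id$, obtaining formulas for $\Scal$ and $\nabla\Scal$ — here one expects $\Scal = m(m-1)(H^2+\varepsilon) - 2\varepsilon\,(m-1)\cos^2\alpha$ or a similar expression, and crucially $\nabla\Scal$ is controlled by $\nabla H$ and $\nabla(\cos^2\alpha)$; (2) write down the normal component of $\tau_2^c(\varphi)=0$, which will be a second-order elliptic equation of the schematic form $\Delta H + c_1(\alpha,\varepsilon)H + c_2 H^3 + (\text{angle terms}) = 0$; (3) write down the tangential component of $\tau_2^c(\varphi)=0$, which by the Codazzi-type identities for totally umbilical hypersurfaces reduces to an equation relating $\nabla H$, $H\,T$ and $\nabla\alpha$ — this is typically the identity that forces $H$ and the geometry to be strongly constrained; (4) combine the normal and tangential equations. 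The decisive simplification is that for a totally umbilical hypersurface the Codazzi equation in a product space already forces $(m-1)\,\text{grad}\,H$ to be proportional to the tangential curvature term $\varepsilon\cos\alpha\,T$; feeding this back into the normal equation should collapse everything.

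The main obstacle I anticipate is step (3)–(4): handling the interplay between $\nabla H$ and $\nabla\alpha$ without assuming $\alpha$ constant. One clean route is to test the normal equation against $H$ and integrate — but $M$ need not be compact, so instead I would argue pointwise/locally. A better strategy is probably to exploit the structure theorem for totally umbilical hypersurfaces in ${\mathbb L}^m(\varepsilon)\times\R$ (as in \cite{MR4493653}): such a hypersurface is, locally, either a slice ${\mathbb L}^m(\varepsilon)\times\{t_0\}$ (which is totally geodesic, so done), or a leaf in a specific foliation where $H$ and $\alpha$ are functions of a single variable satisfying an ODE. Substituting that one-variable ansatz turns the $c$-biharmonic PDE into an ODE system in that variable, and one shows by a direct ODE analysis — looking at the highest-order behaviour and using the $\text{Tr}\,A^3 = mH^3$ term, which is the genuinely new ingredient compared to ordinary biharmonicity — that no nontrivial solution exists, forcing $H\equiv0$. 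Thus the real work is the ODE elimination, and the conceptual point is that the conformal correction terms, far from helping produce new examples, reinforce the rigidity already present in the biharmonic case.
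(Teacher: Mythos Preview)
Your setup is right, and you correctly identify the Codazzi identity as the key ingredient: for $A=H\,\Id$ it reads $X(H)+\varepsilon\cos\alpha\,\langle X,T\rangle=0$ for every $X$, i.e.\ $\grad H=-\varepsilon\cos\alpha\,T$ (no factor $(m-1)$, by the way). Where your plan diverges from the paper is in what you do with this relation. You propose to feed it into the \emph{normal} component of $\tau_2^c=0$, and, anticipating trouble, fall back on the structure theorem for totally umbilical hypersurfaces in ${\mathbb L}^m(\varepsilon)\times\R$ together with an ODE analysis.

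The paper's argument is far shorter and never touches the normal equation at all. Substituting $A=H\,\Id$ into the \emph{tangential} component \eqref{BHP1} gives
\[
\left[\tau_2^c\right]^{\top}=-\tfrac{1}{3}m(m+8)\,H\grad H-\tfrac{2}{3}(m-1)(3m+2)\,\varepsilon H\cos\alpha\,T,
\]
and now the Codazzi relation $\varepsilon\cos\alpha\,T=-\grad H$ collapses the two terms into a single multiple of $H\grad H$, namely $(5m^2-10m-4)\,H\grad H=0$. Since that integer coefficient never vanishes for $m\geq 2$, $H^2$ is constant, so $H$ is constant; then $\grad H=0$ forces $\varepsilon\cos\alpha\,T=0$, and the three trivial cases ($\varepsilon=0$, cylinder, slice) each give totally geodesic immediately.

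Your alternative via the classification of totally umbilical hypersurfaces plus an ODE elimination would presumably also succeed, but it imports a nontrivial external result and replaces a two-line pointwise argument with a case analysis. The moral is that here the tangential equation alone already closes the loop; the normal equation and the $\Delta H$ term are red herrings.
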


The paper is organized as follows: in Section~\ref{sec-preliminaries} we carry out some preliminary computations on $c$-biharmonic hypersurfaces in $\mathbb{ L}^m(\varepsilon)\times \R$. In Section~\ref{sec-proofTh1} we prove Theorem~\ref{theorem:cbiharmonicconstantcurvatures}. Next, in Section~\ref{sec-iso-Sn} we study in detail isoparametric $c$-biharmonic hypersurfaces in the Euclidean sphere $\s^{m+1}$. In Section~\ref{sec-new-remark} we shall prove Theorems~\ref{teo2-m=2-remark}, \ref{teo2-remark}, \ref{teo2-m=4-remark} and \ref{teo-global}. Finally, in the last section we provide the proof of Theorem~\ref{teo-rigidity-totally-geodesic}.\\

{\small {\em Notations}. Throughout this article we will use the following sign conventions: 
for the Riemannian curvature tensor field we use 
$$
R(X,Y)Z=[\nabla_X,\nabla_Y]Z-\nabla_{[X,Y]}Z,
$$ 
for the Ricci tensor field 
$$
g(\Ric(X),Y)=\Ric(X,Y)=\trace \left\{Z\to R(Z,X)Y\right\},
$$
and the scalar curvature is given by
$$
\Scal=\trace\Ric.
$$
The trace is taken with respect to the domain metric and we write $\trace$ instead of $\trace_g$.

For the rough Laplacian on the pull-back bundle $\phi^{-1} TN$ we employ the geometers sign convention
$$
\bar\Delta = -\trace(\bar\nabla\bar\nabla-\bar\nabla_\nabla).
$$
Finally, throughout the paper we shall assume that all the hypersurfaces $M^m$ are oriented and without boundary.

\section{$c$-biharmonic hypersurfaces in ${\mathbb L}^m(\varepsilon)\times \R$} \label{sec-preliminaries}
 
Let ${\mathbb L}^m(\varepsilon)$ represent, depending on the value of $\varepsilon$, the space form $\s^m$, $\h^m$ or $\R^m$ with
constant curvature $\varepsilon=1$, $-1$ or $0$, respectively.
 
Given a hypersurface  
\[
\varphi:M^m\hookrightarrow {\mathbb L}^m(\varepsilon)\times \mathbb R\,,
\]
let $\eta$ denote a unit vector field normal to
$\varphi$ and let $\partial_t$ denote a unit vector field tangent to the second factor $\R$ of the ambient space. Let us denote by $\langle,\rangle$ the product metric in ${\mathbb L}^m(\varepsilon)\times \R$. We further denote by $A$ the {\em shape operator}, by $H=\trace A/m$ the {\em mean curvature function} and by $B(X,Y)=\langle A(X),Y\rangle \eta$ its {\em second fundamental form}, $X,Y\in C(TM^m)$.

Since $\partial_t$ is a unit vector field globally defined on the
ambient space ${\mathbb L}^m(c)\times \mathbb R$, we can decompose it in the
following form
\begin{equation}
\label{eq:dt}
\partial_t=d\varphi(T)+\cos\alpha\,\eta,
\end{equation}
where $\cos\alpha= \langle \partial_t, \eta\rangle$  and  $d\varphi(T), T\in C(TM^m)$, denotes the tangential component of $\partial_t$. In the sequel we shall identify $T$ with $d\varphi(T)$.

Using that $\partial_t$ is parallel on ${\mathbb L}^m(c)\times \mathbb R$, a direct
computation yields the useful formulas 
\begin{eqnarray}
\nabla_XT&=&\cos\alpha \,A(X)\label{eq:nablaXT}\\
X(\cos\alpha)&=&-\langle A(X),T\rangle \label{eq:XTheta}
\end{eqnarray}
for every tangent vector field $X\in C(TM^m)$.

The Riemannian curvature tensor of ${\mathbb L}^m(\varepsilon)\times\mathbb{R}$ can be easily computed and it is given
by the formula
\begin{align}\label{eq:tildeR}
\bar R(\bar X,\bar Y)\bar Z=\varepsilon\{\langle \bar Y,\bar Z\rangle \bar X-\langle \bar X,\bar Z\rangle \bar Y -\langle
\bar Y,\partial_t\rangle\langle \bar Z,\partial_t\rangle \bar X
+\langle \bar X,\partial_t\rangle\langle \bar Z,\partial_t\rangle \bar Y\nonumber\\
+\langle \bar X,\bar Z\rangle\langle \bar Y,\partial_t\rangle \partial_t -\langle
\bar Y,\bar Z\rangle\langle \bar X,\partial_t\rangle \partial_t\},
\end{align}
where $\bar X, \bar Y, \bar Z$ are vector fields on ${\mathbb L}^m(\varepsilon)\times\mathbb{R}$.

Finally, the Codazzi equation is given by
\begin{eqnarray}
\label{eq:Codazzieq} 
(\nabla_XA)Y-(\nabla_YA)X = \varepsilon
\cos\alpha(\langle Y,T\rangle X-\langle X,T\rangle Y),
\end{eqnarray}
where $X,Y\in C(T M^m)$.\\

We are in the right position to give, in terms of the angle function $\alpha$,  the
$c$-biharmonic tension field \eqref{eq-$c$-biharmonic-intro} of a hypersurface  $\varphi:M^{m}\hookrightarrow {\mathbb L}^m(\varepsilon)\times \mathbb{R}$.

\begin{proposition} Let $\varphi:M^{m}\hookrightarrow {\mathbb L}^m(\varepsilon)\times \mathbb{R}$  be a hypersurface. Then the  $c$-biharmonic tension field is given by
$$
\tau_2^c=\left[\tau_2^c\right]^{\top}+\left[\tau_2^c\right]^{\perp}
$$
where
\begin{equation}\label{BHP1}
\begin{aligned}
\left[\tau_2^c\right]^{\top}= & -2 m A( \grad H)-m^2 H \grad H-2 m(m-1) \varepsilon H \cos \alpha\, T \\
& -\frac{4}{3}(m-1) \varepsilon \cos \alpha A(T)-\frac{1}{3} \grad |A|^2+\frac{2}{3} m^2 H \grad H
\end{aligned}
\end{equation}
is the tangential component, while
\begin{equation}\label{BHP2}
\begin{aligned}
\left[\tau_2^c\right]^{\perp} = &\big\{-m \Delta H+\frac{2}{3} m(m-1)(3-m) \varepsilon H+\frac{1}{3} m(7 m-13) \varepsilon H \sin^2 \alpha \\
&+\frac{5}{3} m H |A|^2+2(m-2) \varepsilon T(\cos\alpha) -2 \trace A^3-\frac{2}{3} m^3 H^3 \big\}\eta
\end{aligned}
\end{equation}
is the normal component.
\end{proposition}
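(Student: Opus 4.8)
The plan is to evaluate the four summands of the intrinsic expression \eqref{eq-$c$-biharmonic-intro} for $\tau_2^c(\varphi)$, using that $\varphi$ is an isometric immersion — so $\tau(\varphi)=mH\eta$ and $(\bar\nabla d\varphi)(X,Y)=B(X,Y)=\langle A(X),Y\rangle\eta$ — together with the Weingarten formula $\bar\nabla_X\eta=-d\varphi(A(X))$, the Gauss and Codazzi equations, the ambient curvature \eqref{eq:tildeR}, and the structural identities \eqref{eq:dt}, \eqref{eq:nablaXT}--\eqref{eq:XTheta}. Evaluating at a point with respect to a local orthonormal frame of $M^m$ geodesic there turns all the curvature and derivative contractions into finite sums that can be computed explicitly.

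First I would handle the ordinary bitension field \eqref{bitension}, namely $\tau_2(\varphi)=-\bar\Delta(mH\eta)-\trace\bar R(d\varphi(\cdot),mH\eta)d\varphi(\cdot)$. Using $\bar\Delta(f\eta)=(\Delta f)\eta-2\bar\nabla_{\grad f}\eta+f\bar\Delta\eta$ with $f=mH$, together with $\bar\nabla_{\grad f}\eta=-d\varphi(A(\grad f))$ and $\bar\Delta\eta=d\varphi\!\left(\sum_i(\nabla_{e_i}A)e_i\right)+|A|^2\eta$, and then inserting the consequence $\sum_i(\nabla_{e_i}A)e_i=m\,\grad H+(m-1)\varepsilon\cos\alpha\,T$ of the Codazzi equation \eqref{eq:Codazzieq}, gives the $-\bar\Delta(mH\eta)$ contribution: tangential part $-2mA(\grad H)-m^2H\grad H-m(m-1)\varepsilon H\cos\alpha\,T$ and normal part $\bigl(-m\Delta H-mH|A|^2\bigr)\eta$. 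For the curvature term one contracts \eqref{eq:tildeR}, using $|T|^2=\sin^2\alpha$ from \eqref{eq:dt}, to obtain $\sum_i\bar R(e_i,\eta)e_i=(m-1)\varepsilon(\cos\alpha\,T-\sin^2\alpha\,\eta)$; hence $-\trace\bar R(d\varphi(\cdot),mH\eta)d\varphi(\cdot)$ adds $-m(m-1)\varepsilon H\cos\alpha\,T$ to the tangential part and $m(m-1)\varepsilon H\sin^2\alpha\,\eta$ to the normal part.

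Next I would compute the three correction terms in \eqref{eq-$c$-biharmonic-intro}. The Gauss equation, together with the contraction of \eqref{eq:tildeR}, yields the intrinsic scalar curvature $\Scal=m^2H^2-|A|^2+m(m-1)\varepsilon-2(m-1)\varepsilon\sin^2\alpha$, so $-\frac{2}{3}\Scal\,\tau(\varphi)$ contributes $\bigl(-\frac{2}{3} m^2(m-1)\varepsilon H+\frac{4}{3} m(m-1)\varepsilon H\sin^2\alpha-\frac{2}{3} m^3H^3+\frac{2}{3} mH|A|^2\bigr)\eta$. Since $(\bar\nabla d\varphi)(\Ric(\cdot),\cdot)$ takes values in the normal bundle, $2\trace(\bar\nabla d\varphi)(\Ric(\cdot),\cdot)=2\trace(A\circ\Ric)\,\eta$, where by the Gauss equation $\Ric(Y)=(m-1)\varepsilon\bigl(Y-\langle Y,T\rangle T\bigr)-\varepsilon\bigl(\sin^2\alpha\,Y-\langle Y,T\rangle T\bigr)+mHA(Y)-A^2(Y)$; tracing this against $A$ produces $2mH|A|^2-2\trace A^3$ together with $\varepsilon H$ and $\varepsilon H\sin^2\alpha$ terms and a term $-2(m-2)\varepsilon\langle A(T),T\rangle$, which by \eqref{eq:XTheta} equals $2(m-2)\varepsilon\,T(\cos\alpha)$ — precisely the term displayed in \eqref{BHP2}. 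Finally, differentiating the above formula for $\Scal$ and using \eqref{eq:XTheta} in the form $\grad(\sin^2\alpha)=2\cos\alpha\,A(T)$ gives $\frac{1}{3} d\varphi(\grad\Scal)=\frac{2}{3} m^2H\grad H-\frac{1}{3}\grad|A|^2-\frac{4}{3}(m-1)\varepsilon\cos\alpha\,A(T)$, which supplies the last three tangential terms of \eqref{BHP1}.

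It then remains to add the four contributions and to collect, separately, the components along $TM^m$ and along $\eta$: the tangential pieces assemble into \eqref{BHP1}, while in the normal part the various $\varepsilon H$, $\varepsilon H\sin^2\alpha$, $H|A|^2$ and $H^3$ contributions combine into the coefficients $\frac{2}{3} m(m-1)(3-m)\varepsilon$, $\frac{1}{3} m(7m-13)\varepsilon$, $\frac{5}{3} m$ and $-\frac{2}{3} m^3$ of \eqref{BHP2}. The work is largely bookkeeping; the only genuinely delicate points are keeping the sign conventions for $\bar\Delta$ and $\bar R$ consistent throughout, and carefully tracking the extra product-structure terms — those involving $T$, $\cos\alpha$ and $\sin^2\alpha=|T|^2$ — that are absent in the space-form case, namely the divergence identity for $A$ coming from \eqref{eq:Codazzieq}, the curvature contractions of \eqref{eq:tildeR}, and the Gauss-equation expressions for $\Ric$ and $\Scal$, all of which feed the terms distinguishing \eqref{BHP1}--\eqref{BHP2} from the classical biharmonic formulas.
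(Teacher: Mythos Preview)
Your proof is correct and follows essentially the same route as the paper: compute each of the four summands in \eqref{eq-$c$-biharmonic-intro} separately via the Gauss and Codazzi equations and the ambient curvature \eqref{eq:tildeR}, then collect tangential and normal parts. The only minor difference is that the paper quotes the bitension field $\tau_2(\varphi)$ from \cite{MR4333889}, whereas you rederive it from scratch via $\bar\Delta(mH\eta)$ and the traced Codazzi identity $\sum_i(\nabla_{e_i}A)e_i=m\,\grad H+(m-1)\varepsilon\cos\alpha\,T$; the remaining computations of $\Ric$, $\Scal$, $2\trace(\bar\nabla d\varphi)(\Ric(\cdot),\cdot)$ and $\tfrac{1}{3}d\varphi(\grad\Scal)$ match the paper's line by line.
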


\begin{proof}We proceed by computing all terms in \eqref{eq-$c$-biharmonic-intro} and then summing them up.
The tension field of the hypersurface is $\tau(\phi)=m H \eta$ and the bitension field, as computed in \cite{MR4333889}, is expressed as follows:
\begin{align}\label{eq0-comp-ctau2}
\tau_2(\phi)=&-2m A(\grad H)-m^2 H \grad H - 2 \varepsilon m (m-1) \cos\alpha H T\nonumber\\
&+ m \big\{-\Delta H-  H |A|^2 + \varepsilon  (m-1) H \sin^2\alpha\big\}\eta.
\end{align}
Let $\{E_i\}_{i=1}^m$ be a local orthonormal frame field on $M$. According to the Gauss equation, we have:
\begin{align}\label{eq1-comp-ctau2}
\Ric(X,Y)=\sum_i \langle R(X,E_i)E_i,Y\rangle =& \sum_i \langle \bar R(X,E_i)E_i,Y\rangle\nonumber \\
&+\sum_i \left( \langle B(X,Y),B(E_i,E_i)\rangle-\langle B(X,E_i),B(Y,E_i)\rangle\right)\,.
\end{align}
Now, using \eqref{eq:tildeR}, we compute 
\begin{align}\label{eq2-comp-ctau2}
\sum_i \langle \bar R(X,E_i)E_i,Y\rangle= \varepsilon\langle (m-1)X-|T|^2 X-(m-2)\langle X,T \rangle T,Y\rangle
\end{align}
and, by the definition of $B$,
\begin{align}\label{eq3-comp-ctau2}
\sum_i \langle B(X,Y),B(E_i,E_i)\rangle&= m H\langle  A(X),Y \rangle \\
-\sum_i \langle  B(X,E_i),B(Y,E_i)\rangle&=- \langle A^2(X),Y \rangle\,.
\end{align}
From this, we derive:
\begin{align}\label{eq4-comp-ctau2}
\Ric(X)=\varepsilon(m-1)X-\varepsilon|T|^2 X-\varepsilon(m-2) \langle X,T \rangle T-A^2(X)+mHA(X) \,.
\end{align}
Furthermore, the scalar curvature can be computed as follows:
\begin{align}\label{eq5-comp-ctau2}
\Scal=\sum_i \langle \Ric(E_i), E_i\rangle=\varepsilon m(m-1)-2\varepsilon(m-1) |T|^2 -|A|^2+m^2H^2\,.
\end{align}
Next we compute the term $2\trace(\bar \nabla d\phi)(\Ric(\cdot),\cdot)$. 
We have, taking into account \eqref{eq4-comp-ctau2} and using \eqref{eq:Codazzieq},
\begin{align}\label{eq6-comp-ctau2}
 2\sum_i(\bar \nabla d\phi)(\Ric(E_i),E_i)=&2 \varepsilon (m-1) \sum_i(\bar \nabla d\phi)(E_i,E_i)
 -2 \varepsilon |T|^2 \sum_i(\bar \nabla d\phi)(E_i,E_i)\nonumber\\
 & -2 \varepsilon(m-2) \sum_i(\bar \nabla d\phi)(T,\langle E_i, T\rangle E_i)- 2  \sum_i(\bar \nabla d\phi)(A^2(E_i),E_i)\nonumber\\
 & + 2 m H  \sum_i(\bar \nabla d\phi)(A(E_i),E_i)\nonumber\\
 =& 2 \varepsilon m (m-1) H \eta - 2 \varepsilon m |T|^2  H \eta - 2 \varepsilon (m-2) B(T,T)\nonumber\\
 &-2 \sum_i B(A^2(E_i),E_i)+2 m H \sum_i B(A(E_i),E_i)\nonumber\\
 =&\big\{ 2 \varepsilon m(m-1) H -2 \varepsilon m |T|^2 H+ 2 \varepsilon (m-2) T(\cos\alpha)\nonumber\\
 & -2 \trace A^3+2m H |A|^2\big\}\eta\,.
\end{align}
Finally, taking into account \eqref{eq:nablaXT}, we have 
$$
E_i (|T|^2) = 2  \langle A(E_i),T\rangle \cos\alpha\,,
$$
and using \eqref{eq5-comp-ctau2} we obtain
\begin{align}\label{eq7-comp-ctau2}
\frac{1}{3} d\phi (\nabla\Scal)=&\frac{1}{3} \sum_i (E_i\Scal)E_i\nonumber\\
=&-\frac{4}{3} \varepsilon (m-1) \cos\alpha A(T)-\frac{1}{3} \grad |A|^2+\frac{2}{3} m^2 H \grad H\,.
\end{align}
By summing up all the computed terms in \eqref{eq-$c$-biharmonic-intro}, specifically \eqref{eq0-comp-ctau2}, \eqref{eq5-comp-ctau2}, \eqref{eq6-comp-ctau2} and \eqref{eq7-comp-ctau2}, and considering both the tangential and normal components, we obtain the desired result.

\end{proof}

\begin{remark}
It is worth to point out that if $\varphi:M^{m}\hookrightarrow {\mathbb L}^m(\varepsilon)\times \mathbb{R}$ is a totally geodesic immersion, that is if $A\equiv 0$, then, using \eqref{eq:XTheta}, $T(\cos\alpha)=0$. Thus, totally geodesic hypersurfaces in ${\mathbb L}^m(\varepsilon)\times \mathbb{R}$ are $c$-biharmonic since \eqref{BHP1} and  \eqref{BHP2} are identically satisfied.
\end{remark}

\section{Proof of Theorem~\ref{theorem:cbiharmonicconstantcurvatures}}\label{sec-proofTh1}

We first recall a result of R.~Chaves and E.~Santos.

\begin{theorem}[{\cite[Theorem~4.1]{MR4032814}}]\label{teo:chaves-santos}
Let $\varphi:M^{m}\hookrightarrow {\mathbb L}^m(\varepsilon)\times \mathbb{R}$  be a hypersurface with constant principal curvatures such that $T$ is a principal direction and $\cos\alpha\neq 0$ for all $p\in M$. Then $\varepsilon=-1$, $A(T)=0$ and the other $m-1$ principal curvatures are all equal either to $b/\sqrt{1+b^2}$ or to $-b/\sqrt{1+b^2}$ for some real constant $b$.
\end{theorem}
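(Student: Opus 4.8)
The plan is to exploit the Codazzi equation \eqref{eq:Codazzieq} together with the structural identities \eqref{eq:nablaXT} and \eqref{eq:XTheta} for the tangential part $T$ of $\partial_t$. Since the principal curvatures are constant and $T$ is a principal direction, I would fix (at a point where $T\neq 0$) a local orthonormal frame $\{E_i\}_{i=1}^m$ diagonalizing the shape operator, $A(E_i)=\lambda_i E_i$ with each $\lambda_i$ constant, arranged so that $E_1=T/|T|$; thus $A(T)=\lambda T$ with $\lambda:=\lambda_1$. From \eqref{eq:XTheta} one gets $X(\cos\alpha)=-\langle A(X),T\rangle=-\lambda\langle X,T\rangle$, that is $\grad(\cos\alpha)=-\lambda T$. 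In particular $E_i(\cos\alpha)=0$ for $i\geq2$, whence $E_i(|T|)=0$ for $i\geq2$ because $|T|^2=1-\cos^2\alpha$ (recall $\partial_t$ is a unit field). The goal of the computation is to convert Codazzi into a single scalar identity relating the $\lambda_i$, $\lambda$ and $|T|$.

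The crux is the following. Writing $T=|T|E_1$ and using \eqref{eq:nablaXT}, for $i\geq2$ we have $\cos\alpha\,\lambda_i E_i=\nabla_{E_i}T=E_i(|T|)E_1+|T|\nabla_{E_i}E_1=|T|\nabla_{E_i}E_1$, so that $\nabla_{E_i}E_1=\frac{\cos\alpha\,\lambda_i}{|T|}E_i$. Feeding this, together with $A(T)=\lambda T$ and the constancy of the $\lambda_i$, into \eqref{eq:Codazzieq} with $X=E_i$, $Y=E_1$ and pairing with $E_i$ gives, after noting that $\langle(\nabla_{E_1}A)E_i,E_i\rangle$ vanishes because $|E_i|^2$ is constant and $A$ is self-adjoint,
\[
\lambda_i(\lambda-\lambda_i)=\varepsilon\,|T|^2,\qquad i=2,\dots,m .
\]
This identity is the engine of the whole argument: its left-hand side is a constant, while its right-hand side is $\varepsilon$ times the a priori non-constant function $|T|^2$.

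From the identity I would argue as follows. If $\varepsilon\neq0$, then $|T|^2=\lambda_i(\lambda-\lambda_i)/\varepsilon$ is constant, so $\cos\alpha$ is constant and $\grad(\cos\alpha)=-\lambda T=0$; since $T\neq0$ this forces $\lambda=0$, i.e. $A(T)=0$, which is one of the asserted conclusions. Substituting $\lambda=0$ yields $\lambda_i^2=-\varepsilon|T|^2$. As $\lambda_i^2\geq0$ and $|T|^2=1-\cos^2\alpha>0$, the sign excludes $\varepsilon=+1$ (that alternative would force $|T|=0$, i.e. the degenerate slice where $T$ is not a genuine direction), leaving $\varepsilon=-1$. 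Then $\lambda_i^2=|T|^2=\sin^2\alpha$, so each of the remaining $m-1$ principal curvatures equals $\pm|T|$; setting $b:=\tan\alpha$ (a real constant since $\cos\alpha$ is a nonzero constant) gives $|T|=|b|/\sqrt{1+b^2}$, hence $\lambda_i=\pm b/\sqrt{1+b^2}$, as claimed.

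The main obstacle is the exclusion of the Euclidean case $\varepsilon=0$: there the identity degenerates to $\lambda_i\in\{0,\lambda\}$ and places no constraint on $\cos\alpha$, and indeed the round cylinders $\mathbb{S}^{m-k}\times\R^{k}\subset\R^{m+1}$ satisfy all the standing hypotheses on a suitable open set. This case must be discarded using the ambient framework of the reference, in which $\varepsilon=\pm1$; it is precisely the point where the hypotheses have to be read so as to exclude the flat product. A secondary technical point, should one require the $m-1$ transverse curvatures to share a common sign rather than merely to lie in $\{\pm b/\sqrt{1+b^2}\}$, is to run \eqref{eq:Codazzieq} among indices $i,j,k\geq2$, where its right-hand side vanishes: this controls the connection coefficients $\langle\nabla_{E_i}E_j,E_k\rangle$ coupling the two transverse eigendistributions and, together with connectedness, upgrades ``each is $\pm b/\sqrt{1+b^2}$'' to ``all equal.'' Finally, the points where $T=0$ (equivalently $\cos\alpha=\pm1$) are treated separately, being either excluded by the meaning of ``principal direction'' or reducing to the totally geodesic slice.
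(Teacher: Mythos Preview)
The paper does not prove this theorem: it is quoted from Chaves--Santos \cite{MR4032814} and used as a black box in the proof of Theorem~\ref{theorem:cbiharmonicconstantcurvatures}. There is therefore no proof in the paper to compare against; what follows is an assessment of your reconstruction.

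Your derivation of the scalar identity $\lambda_i(\lambda-\lambda_i)=\varepsilon\,|T|^2$ for $i\geq2$, obtained by combining \eqref{eq:nablaXT}, \eqref{eq:XTheta} and the Codazzi equation \eqref{eq:Codazzieq} in a principal frame with $E_1=T/|T|$, is correct and is exactly the engine of the Chaves--Santos argument. From it you draw the right conclusions: constancy of $|T|$ and $\cos\alpha$, then $\lambda=0$ from $\grad(\cos\alpha)=-\lambda T$, then $\lambda_i^2=-\varepsilon\,|T|^2$, which excludes $\varepsilon=+1$ on the set where $T\neq0$ and yields $\lambda_i=\pm|T|=\pm b/\sqrt{1+b^2}$ for $\varepsilon=-1$. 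You are also right that $\varepsilon=0$ cannot be excluded by this computation and must be read out of the hypotheses; the cited paper treats only $\s^n\times\R$ and $\h^n\times\R$, so the statement as reproduced here tacitly assumes $\varepsilon\neq0$ (an open hemisphere in $\R^{m+1}$ would otherwise be a counterexample).

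The only part that is not actually carried out is the sharpening from ``each $\lambda_i\in\{\pm|T|\}$'' to ``all $\lambda_i$ equal'', which is how the theorem is used later in the paper when computing $\trace A^3=\pm(m-1)\,b^3/(1+b^2)^{3/2}$. Your gesture at the transverse Codazzi relations is the right starting point, but as written it only shows that certain connection coefficients between the $+|T|$ and $-|T|$ eigendistributions vanish; it does not yet yield a contradiction to mixed signs, and one needs an integrability/global argument on the eigendistributions (or the explicit classification in \cite{MR4032814}) to finish. This is the one genuine gap, and you flag it yourself.
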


If we assume that the principal curvatures of the hypersurfaces are constant, we have that
\[
H=\frac{k_1+\cdots+k_m}{m}={\rm constant}\quad {\rm and}\quad |A|^2=k_1^2+\cdots+k_m^2={\rm constant}\,.
\]

Now, assuming that both $H$ and $|A|^2$ are constant, the tangential component \eqref{BHP1}  of $\tau_2^c$ simplifies to
\begin{equation}\label{BHP2-1}
\left[\tau_2^c\right]^{\top} =  -\frac{2}{3} \varepsilon\, (m-1)  \cos \alpha \big(3m H\, T  +2 A(T)\big)\,.
\end{equation}
Consequently, if $\varphi:M^{m}\hookrightarrow {\mathbb L}^m(\varepsilon)\times \mathbb{R}$ is $c$-biharmonic, one of the following three cases must occur.\\

\textbf{Case $\varepsilon=0$}.\quad Then ${\mathbb L}^m(\varepsilon)\times \mathbb{R}= \mathbb{R}^{m+1}$ and $\varphi:M^{m}\hookrightarrow \mathbb{R}^{m+1}$ is an isoparametric $c$-biharmonic hypersurface in $\mathbb{R}^{m+1}$. Now, isoparametric hypersurfaces in $\mathbb{R}^{m+1}$ are: totally geodesic hyperplanes; round spheres; cylinders over round spheres. Consequently, for the isoparametric hypersurfaces in $\mathbb{R}^{m+1}$ it is straightforward to check that the vanishing of the normal component \eqref{BHP2} of $\tau_2^c$, that is 
\begin{equation}\label{BHP2-2}
 \frac{5}{3} m H |A|^2 -2 \trace A^3-\frac{2}{3} m^3 H^3=0\,,
\end{equation}
implies that $A=0$. Thus the isoparametric $c$-biharmonic hypersurfaces in $\mathbb{R}^{m+1}$ are only totally geodesic hyperplanes.\\

\textbf{Case $\cos\alpha=0$}.\quad In this case $\partial_t$ is tangent to the hypersurface and $\varphi(M^m)$ is locally a cylinder $M^{m-1}\times\R$ where $M^{m-1}\hookrightarrow {\mathbb L}^m(c)$ is a isoparametric $c$-biharmonic hypersurface in the space form ${\mathbb L}^m(\varepsilon)$. \\

\textbf{Case $\varepsilon\neq 0$ and $ \cos\alpha\neq 0$}.\quad In this case, from \eqref{BHP2-1}, the vector field $T$ satisfies
\begin{equation}\label{eq:Tprincipal}
A(T)=-\frac{3mH}{2}T
\end{equation}
which implies that $T$ is a principal direction with corresponding principal curvature $-3mH/2$. We can thus apply Theorem~\ref{teo:chaves-santos} and since the principal curvature corresponding to $T$ is $0$, we have that $H=0$. Now, the vanishing of the normal component \eqref{BHP2} of $\tau_2^c$ gives
\[
\trace A^3=\pm (m-1)\frac{b^3}{(1+b^2)^{3/2}}=0\,.
\]
Thus $b=0$ and the hypersurface is totally geodesic.

\section{Isoparametric $c$-biharmonic hypersurfaces in $\s^{m+1}$}\label{isoparametric-in-sm}\label{sec-iso-Sn}

In \cite{arXiv:2311.04493} the authors began the study of isoparametric $c$-biharmonic hypersurfaces in space forms. As a premise, they proved the following results:
\begin{proposition}\label{prop-BNO-general}\quad
\begin{enumerate}
\item[{\rm (a)}] Any totally geodesic hypersurface in ${\mathbb L}^{m+1}(\varepsilon)$ is $c$-biharmonic.
\item[{\rm (b)}] Let $M^m$ be a minimal hypersurface in ${\mathbb L}^{m+1}(\varepsilon)$. Then $M^m$ is $c$-biharmonic if and only if its scalar curvature is constant and ${\rm Tr}\,A^3=0$. 
\end{enumerate}
\end{proposition}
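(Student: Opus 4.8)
The plan is to specialize the computation behind the Proposition to a space-form target, where the ambient curvature tensor reduces to the simpler constant-curvature form $\bar R(\bar X,\bar Y)\bar Z=\varepsilon\{\langle \bar Y,\bar Z\rangle \bar X-\langle \bar X,\bar Z\rangle \bar Y\}$. Following the same bookkeeping as in the proof above, I would first record the three geometric inputs for an immersion $\varphi:M^m\hookrightarrow {\mathbb L}^{m+1}(\varepsilon)$. The tension field is $\tau(\varphi)=mH\eta$; the Gauss equation now yields the shape-operator form of the Ricci tensor
\begin{equation*}
\Ric(X)=(m-1)\varepsilon\,X+mH\,A(X)-A^2(X),
\end{equation*}
and tracing gives $\Scal=m(m-1)\varepsilon+m^2H^2-|A|^2$. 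Substituting these into \eqref{eq-$c$-biharmonic-intro} and separating tangential from normal parts produces an explicit expression for $\tau_2^c$ in terms of $H$, $A$ and $\varepsilon$ only; the $\cos\alpha$ and $T$ terms of \eqref{BHP1}--\eqref{BHP2} are absent here, since there is no $\R$-factor.

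For part (a), totally geodesic means $A\equiv 0$, whence $H=0$, $\tau(\varphi)=0$, the second fundamental form $(\bar\nabla d\varphi)=B$ vanishes, and $\Scal=m(m-1)\varepsilon$ is constant. Each of the four summands in \eqref{eq-$c$-biharmonic-intro} therefore vanishes identically, so the immersion is $c$-biharmonic without any further condition.

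For part (b), minimality means $H=0$, so again $\tau(\varphi)=0$; this eliminates both $\tau_2(\varphi)$ and the term $-\frac{2}{3}\Scal\,\tau(\varphi)$, leaving only the two conformal correction terms, which I would compute separately. Using $B(X,Y)=\langle A(X),Y\rangle\eta$ together with the minimal Ricci tensor $\Ric=(m-1)\varepsilon\,\Id-A^2$, the curvature term is purely normal:
\begin{equation*}
2\,\trace(\bar\nabla d\varphi)(\Ric(\cdot),\cdot)=2\,\trace(A\circ\Ric)\,\eta=-2\,\trace A^3\,\eta,
\end{equation*}
the linear-in-$A$ piece dropping out because $\trace A=mH=0$. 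Using $\Scal=m(m-1)\varepsilon-|A|^2$, the gradient term is purely tangential, $\frac{1}{3}d\varphi(\nabla\Scal)=-\frac{1}{3}\grad|A|^2$. Hence $\tau_2^c=-\frac{1}{3}\grad|A|^2-2\,\trace A^3\,\eta$, whose vanishing is equivalent to $|A|^2$ being constant and $\trace A^3=0$. Since for a minimal hypersurface $\Scal=m(m-1)\varepsilon-|A|^2$, constancy of $|A|^2$ is the same as constancy of $\Scal$, which gives the stated characterization.

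The conceptual point to handle with care—rather than a technical obstacle—is that, in contrast with ordinary biharmonicity, minimality does \emph{not} automatically imply $c$-biharmonicity: although $\tau(\varphi)=0$ kills $\tau_2(\varphi)$ and the $\Scal\,\tau(\varphi)$ term, the two conformal correction terms $2\,\trace(\bar\nabla d\varphi)(\Ric(\cdot),\cdot)$ and $\frac{1}{3}d\varphi(\nabla\Scal)$ persist and encode exactly the two obstructions $\trace A^3=0$ and $\Scal=\text{constant}$. The only genuine care required is the tangential/normal bookkeeping and the observation that the $mH\,A$ contribution to $\Ric$ disappears once $H=0$.
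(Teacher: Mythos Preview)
Your argument is correct. Note, however, that the present paper does not actually prove Proposition~\ref{prop-BNO-general}: it is quoted from \cite{arXiv:2311.04493} as background. Your computation is precisely the natural one and is presumably what that reference does as well --- specialize \eqref{eq-$c$-biharmonic-intro} to a space-form target, observe that minimality kills $\tau_2$ and the $\Scal\,\tau$ term, and compute the two surviving conformal corrections to obtain $\tau_2^c=-\tfrac{1}{3}\grad|A|^2-2\,\trace A^3\,\eta$.
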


From this, they deduced that any hyperplane in $\R^{m+1}$ is $c$-biharmonic, while there is no example of $c$-biharmonic hypersurfaces in $\R^{m+1}$ amongst $m$-dimensional round spheres or cylinders.

Next, they characterized $c$-biharmonic small hyperspheres and generalized Clifford tori in $\s^{m+1}$. Finally, they gave a full analysis of isoparametric $c$-biharmonic hypersurfaces  in the hyperbolic space $\h^{m+1}$.

As an integration of the above work, in this section we investigate the existence of isoparametric $c$-biharmonic  hypersurfaces in $\s^{m+1}$ in the remaining cases, that is, isoparametric hypersurfaces of degree $\ell=3,4,6$.  

We recall that each isoparametric hypersurface in $\s^{m+1}$ is CMC and has $\ell$ distinct constant principal curvatures $k_1>\cdots >k_\ell$ with constant multiplicities $m_1,\ldots,m_\ell$, $m_1+\ldots +m_\ell=m$. Moreover, the only possible values for $\ell$ are $1,2,3,4,6$, and $m_{i+2}=m_i$, so that there are at most two distinct multiplicities, which we shall denote $m_1,m_2$. The integer $\ell$ is called the \textit{degree} of the isoparametric hypersurface. 

We follow the notation of \cite{MOR-Israel}. By a suitable choice of the orientation we can assume $m_1 \leq m_2$ and we have the following explicit description of the principal curvatures of the isoparametric families $M_{s}^m$ in $\s^{m+1}$, $s \in (0,\pi/\ell)$:
\begin{equation}\label{principal-curvatures}
k_i(s)= \cot \left( s +\frac{(i-1)\pi}{\ell} \right ), \quad i=1,\ldots,\ell\,.
\end{equation}
Next, we recall some further useful facts:\\

\textbf{Case $\ell=3$}.\quad There exist only four examples of this type, corresponding to $m_1=m_2=1,2,4,8$.\\

\textbf{Case $\ell=6$}.\quad It was proved in \cite{MR714104} that in this case necessarily $m_1=m_2$ and the possible values are $m_1=1$ or $m_1=2$. \\

\textbf{Case $\ell=4$}.\quad This is the richest case. The only examples with $m_1=m_2$ occur when $m_1=m_2=1$ or $m_1=m_2=2$, but we have plenty of examples with $m_1< m_2$. Indeed, instances occur when $m_1=4$ and $m_2=5$, or $1+m_1+m_2$ is a multiple of $2^{\xi(m_1-1)}$, where $\xi(n)$ denotes the number of natural numbers $p$ such that $1 \leq p \leq n$ and $p \equiv 0,1,2,4 \,({\rm mod}\, 8)$.

To give a more explicit idea of the situation, we point out that the following pairs of multiplicities come about: $m_1=1,m_2\geq 2$; $m_1=2,m_2=2k,k\geq 2$; $m_1=3,m_2=4$; $m_1=4,m_2=4k-5,k\geq 3$.

\vspace{2mm}

Our first result is
\begin{theorem}\label{th-isop-minimal}\quad 
\begin{enumerate}
\item[{\rm (a)}] Let $M^m$ be a minimal isoparametric hypersurface in $\s^{m+1}$ of degree $\ell$, $\ell=3\,\,{\rm or}\,\,6\,$. Then $M^m$ is $c$-biharmonic.
\item[{\rm (b)}] Let $M^m$ be a minimal isoparametric hypersurface in $\s^{m+1}$ of degree $4$ and assume that $m_1=m_2$. Then $M^m$ is $c$-biharmonic.
\end{enumerate}
\end{theorem}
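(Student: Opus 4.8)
The plan is to reduce the statement, via Proposition~\ref{prop-BNO-general}(b), to verifying two facts for a minimal isoparametric hypersurface $M^m$ in $\s^{m+1}$: that its scalar curvature is constant, and that $\trace A^3 = 0$. Constancy of the scalar curvature is immediate, since an isoparametric hypersurface has constant principal curvatures, so by the Gauss equation in $\s^{m+1}$ we get $\Scal = m(m-1) + m^2H^2 - |A|^2 = m(m-1) - |A|^2$ (using minimality, $H=0$), which is constant. Hence everything comes down to computing $\trace A^3 = \sum_{i=1}^\ell m_i k_i^3$ for the relevant isoparametric families and showing it vanishes.

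For the computation I would use the explicit description \eqref{principal-curvatures}, $k_i(s) = \cot\left(s + \frac{(i-1)\pi}{\ell}\right)$, together with the fact that the minimal member of the family is singled out by $\sum_i m_i k_i = 0$. In the equal-multiplicity cases, i.e. $\ell=3,6$ and $\ell=4$ with $m_1=m_2$, all $m_i$ are equal, so minimality forces $\sum_{i=1}^\ell \cot\left(s+\frac{(i-1)\pi}{\ell}\right) = 0$, and one knows classically that this holds exactly at $s = \frac{\pi}{2\ell}$ (the angles $s + \frac{(i-1)\pi}{\ell}$ are then symmetric about $\pi/2$, pairing $\theta$ with $\pi-\theta$ so that cotangents cancel). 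At that value the multiset $\{k_i\}$ is symmetric under $k \mapsto -k$ — for even $\ell$ the angles pair up as $\theta$ and $\pi-\theta$ directly; for $\ell=3$ one checks that $\{\cot\frac{\pi}{6}, \cot\frac{\pi}{2}, \cot\frac{5\pi}{6}\} = \{\sqrt3, 0, -\sqrt3\}$ — and therefore $\sum_i k_i^3 = 0$ as well, since any odd power sum over a set symmetric about the origin vanishes. Multiplying by the common multiplicity $m_1$ gives $\trace A^3 = m_1 \sum_i k_i^3 = 0$, which is what we need. I would present the $\ell=3$ case concretely (three curvatures $\sqrt3, 0, -\sqrt3$) and handle $\ell=4,6$ with the symmetry argument, perhaps recording explicitly that for $\ell=4$ the curvatures at $s=\pi/8$ are $\cot\frac{\pi}{8}, \cot\frac{3\pi}{8}, \cot\frac{5\pi}{8}, \cot\frac{7\pi}{8} = 1+\sqrt2, \sqrt2-1, 1-\sqrt2, -1-\sqrt2$, and for $\ell=6$ at $s=\pi/12$ they are $\pm(2+\sqrt3), \pm 1, \pm(2-\sqrt3)$.

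The only genuine point requiring care is justifying that the minimal isoparametric hypersurface of degree $\ell$ really does have the symmetric curvature configuration, i.e. that $\sum_i \cot\left(s + \frac{(i-1)\pi}{\ell}\right) = 0$ forces $s = \frac{\pi}{2\ell}$ on $(0,\pi/\ell)$; this follows from strict monotonicity of $s \mapsto \sum_i \cot(s + \frac{(i-1)\pi}{\ell})$ on that interval together with the obvious sign change, or from the identity $\sum_{i=0}^{\ell-1}\cot\left(s+\frac{i\pi}{\ell}\right) = \ell\cot(\ell s)$, whose zero in $(0,\pi/\ell)$ is exactly $s=\frac{\pi}{2\ell}$. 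I expect this to be routine rather than an obstacle; the main bookkeeping is simply organizing the three (or five, counting the unequal-multiplicity subcases we are excluding) cases. Note that part (b) deliberately restricts to $m_1 = m_2$ precisely because when $m_1 \neq m_2$ the sum $\sum_i m_i k_i^3$ need not vanish at the minimal member — the symmetry of the curvature multiset is broken by unequal multiplicities — which is why the degree-$4$ case with $m_1 < m_2$ is treated separately later (Theorem~\ref{th-isop-minimal} being complemented by the existence result alluded to in the remark after Theorem~\ref{teo2-m=4-remark}).
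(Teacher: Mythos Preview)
Your proposal is correct and follows essentially the same route as the paper: reduce via Proposition~\ref{prop-BNO-general}(b) to checking constancy of $\Scal$ and vanishing of $\trace A^3$, factor out the common multiplicity in the equal-multiplicity cases, and verify $\sum_i k_i^3=0$ at $s=\pi/(2\ell)$ by the $k\mapsto -k$ symmetry of the principal curvatures. The paper simply lists the explicit curvature values case by case, whereas you supplement this with the cotangent-sum identity and a cleaner symmetry argument, but the substance is identical.
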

\begin{proof} According to Proposition~\ref{prop-BNO-general}(b) we just have to verify that in our hypotheses the scalar curvature is constant and ${\rm Tr}\,A^3=0$. As for the statement concerning $\Scal$, we recall that
\[
\Scal=m(m-1)+m^2 H^2-|A|^2\,.
\]
Then, since $H$ and $|A|^2$ are constant, so is $\Scal$.  Thus, it remains to compute $\trace A^3$. In both cases $\ell=3,6$ we know that $m_1=m_2$, while if $\ell=4$ this is true by assumption. 

It follows easily that, for each fixed $\ell$, the condition $\trace A^3=0$ becomes equivalent to
\begin{equation}\label{eq-TrA3=0}
\sum_{i=1}^\ell \left (k_i(s) \right)^3=0\,.
\end{equation}
But minimality occurs if and only if $s=\pi / (2 \ell)$ and using \eqref{principal-curvatures} we can handle the three cases:
\[
\begin{array}{ll}
{\rm Case}\,\ell=3 \colon & k_1=\sqrt{3},\,k_2=0,\,k_3=-k_1\\
{\rm Case}\,\ell=4 \colon & k_1=\sqrt{2}+1,\,k_2=\sqrt{2}-1,\,k_3=-k_2,\,k_4=-k_1\\
{\rm Case}\,\ell=6 \colon &k_1=\sqrt{3}+2,\,k_2=1,\,k_3=2-\sqrt{3},\,k_4=-k_3,\,k_5=-k_2,\,k_6=-k_1
\end{array}
\]
from which it is immediate to deduce that \eqref{eq-TrA3=0} holds and so the proof is completed.

\end{proof}

For our purposes, it is now useful to recall that, from \cite{arXiv:2311.04493}, an isoparametric hypersurface  in $\s^{m+1}$ is $c$-biharmonic if and only if

\begin{equation}\label{eq-$c$-biharmonicity-isopar}
[\tau_2^c]^\perp(s)=H m \left(5 |A|^2-2 m^2 H^2-2 m^2+11 m-6\right)-6 \trace A^3=0 \,.
\end{equation}
Since \eqref{eq-$c$-biharmonicity-isopar} represents the vanishing of the normal component of $\tau_2^c$, we have denoted with $[\tau_2^c]^\perp(s)$ the left-hand side of \eqref{eq-$c$-biharmonicity-isopar}.

Therefore, we are now in the right position to perform a more specific analysis of the existence of isoparametric $c$-biharmonic  hypersurfaces of degree $\ell$ in $\s^{m+1}$, $\ell=3,4,6$. 
Indeed, since according to \eqref{principal-curvatures} we know precisely the principal curvatures, we can plug this information into \eqref{eq-$c$-biharmonicity-isopar} and obtain the explicit condition of $c$-biharmonicity in terms of the parameter $s$.\\

We shall obtain a converse of Theorem~\ref{th-isop-minimal}. To this purpose, we proceed case by case. \\

\textbf{Case $\ell=3$}. \quad We know that there exist only four examples of this type, corresponding to $m_1=m_2=1,2,4,8$. The $c$-biharmonicity equation \eqref{eq-$c$-biharmonicity-isopar} becomes
\begin{equation}\label{eq:c-bihar-l3}
\frac{27 m_1 \cos (3 s)  \Big(2 m_1^2+m_1 \cos (6 s)-6 m_1+6\Big)}{\sin^3(s)(2 \cos (2 s)+1)^3}=0\,.
\end{equation}

Now, a routine analysis shows that, on the relevant interval $(0,\pi/3)$, the unique solution of equation \eqref{eq:c-bihar-l3} is $s=\pi/6$ and this value corresponds to the minimal hypersurface.

By way of conclusion, we have shown that the converse of Theorem~\ref{th-isop-minimal}(a) holds in this case, that is:
\begin{proposition}\label{pro-c-biharmonic-iso-l3}
An isoparametric hypersurface in $\s^{m+1}$ of degree $3$ is $c$-biharmonic if and only if it is minimal.
\end{proposition}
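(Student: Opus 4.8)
The plan is to reduce the $c$-biharmonicity of a degree-$3$ isoparametric hypersurface in $\s^{m+1}$ to the single trigonometric equation \eqref{eq:c-bihar-l3}, and then to show that on the interval $(0,\pi/3)$ this equation forces $s=\pi/6$, which is exactly the minimal member of the family (so that the ``if'' direction is already covered by Theorem~\ref{th-isop-minimal}(a), and only the ``only if'' direction requires work). First I would use \eqref{principal-curvatures} with $\ell=3$, recalling that for degree $3$ one necessarily has $m_1=m_2$, so that $m=3m_1$, $H=\tfrac{1}{m}\sum m_i k_i=\tfrac{m_1}{m}\sum_{i=1}^3 k_i(s)$, and similarly $|A|^2=m_1\sum_{i=1}^3 k_i(s)^2$ and $\trace A^3=m_1\sum_{i=1}^3 k_i(s)^3$. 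Substituting $k_i(s)=\cot\!\big(s+\tfrac{(i-1)\pi}{3}\big)$ and using standard cotangent-sum identities (or a direct common-denominator computation with $\sin s$, $\sin(s+\pi/3)$, $\sin(s+2\pi/3)$) to simplify $\sum k_i$, $\sum k_i^2$, $\sum k_i^3$, I would plug the resulting expressions into the $c$-biharmonicity criterion \eqref{eq-$c$-biharmonicity-isopar}; after clearing denominators this collapses to \eqref{eq:c-bihar-l3}.

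Next I would analyze \eqref{eq:c-bihar-l3} on $s\in(0,\pi/3)$. The denominator $\sin^3(s)(2\cos(2s)+1)^3$ is nonzero there (since $2\cos(2s)+1>0$ for $s<\pi/3$, vanishing only at the endpoint $s=\pi/3$), so the equation is equivalent to
\[
\cos(3s)\,\big(2m_1^2+m_1\cos(6s)-6m_1+6\big)=0 .
\]
The first factor $\cos(3s)$ vanishes on $(0,\pi/3)$ only at $s=\pi/6$. For the second factor, writing $\cos(6s)=2\cos^2(3s)-1\in[-1,1]$, the bracket takes values in $[\,2m_1^2-7m_1+6,\ 2m_1^2-5m_1+6\,]=[(2m_1-3)(m_1-2),\ (2m_1-2)(m_1-...)]$; more simply, I would just note that for $m_1\in\{1,2,4,8\}$ the expression $2m_1^2-6m_1+6+m_1\cos(6s)$ is strictly positive: its minimum (at $\cos(6s)=-1$) equals $2m_1^2-7m_1+6$, which is $1,0,22,114$ for $m_1=1,2,4,8$ respectively, and for $m_1=2$ the value $0$ is attained only when $\cos(6s)=-1$, i.e.\ $6s=\pi$, i.e.\ $s=\pi/6$ again. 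Hence in every case the only solution in $(0,\pi/3)$ is $s=\pi/6$, and by the computation in the proof of Theorem~\ref{th-isop-minimal} this value corresponds precisely to the minimal hypersurface of the family. Combined with Theorem~\ref{th-isop-minimal}(a) this proves the stated equivalence.

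The main obstacle, such as it is, lies not in the final trigonometric analysis but in carrying out the simplification of $\sum k_i(s)^p$ ($p=1,2,3$) cleanly enough that \eqref{eq-$c$-biharmonicity-isopar} actually condenses to the compact form \eqref{eq:c-bihar-l3}; this is a routine but slightly delicate identity manipulation, most transparently done by exploiting that $k_1(s),k_2(s),k_3(s)$ are the three roots of a cubic in $\cot$ obtained from $\cot(3s)=\cot\!\big(3s+\pi\big)$ type relations, so that their power sums are governed by Newton's identities applied to the elementary symmetric functions of the $k_i$. Once the right-hand side of \eqref{eq-$c$-biharmonicity-isopar} is expressed through those symmetric functions, the reduction is purely algebraic and the positivity argument above finishes the proof.
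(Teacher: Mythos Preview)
Your proposal is correct and follows essentially the same route as the paper: reduce \eqref{eq-$c$-biharmonicity-isopar} to \eqref{eq:c-bihar-l3} via \eqref{principal-curvatures}, then argue that on $(0,\pi/3)$ the only solution is $s=\pi/6$; the paper simply labels this last step ``a routine analysis,'' whereas you spell it out. One harmless slip: the minimum $2m_1^2-7m_1+6=(2m_1-3)(m_1-2)$ equals $10$ and $78$ for $m_1=4,8$, not $22$ and $114$, but this does not affect the positivity argument.
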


\textbf{Case $\ell=6$}.\quad In this case necessarily $m_1=m_2$ and the possible values are $m_1=1$ or $m_1=2$. The analysis proceeds precisely as in the case that $\ell=3$ and so we omit the details. 

Again, the conclusion is: \textit{the minimal isoparametric hypersurfaces in $\s^{m+1}$ of degree $6$ are the only isoparametric $c$-biharmonic hypersurfaces in $\s^{m+1}$ of degree $6$.}\\

\textbf{Case $\ell=4$}.\quad In this case it is convenient to separate the two instances $m_1=m_2$ and $m_1< m_2$.\\

\textbf{Subcase $m_1=m_2$}.\quad The analysis can be carried out as in the cases $\ell=3,6$ and so we omit the details. 
The conclusion is: \textit{when $m_1=m_2$, the minimal isoparametric hypersurfaces in $\s^{m+1}$ of degree $4$ are the only isoparametric $c$-biharmonic hypersurfaces in $\s^{m+1}$ of degree $4$.}\\

\textbf{Subcase $m_1 < m_2$}.\quad This case is interesting because it provides examples of isoparametric $c$-biharmonic hypersurfaces which are \textit{non-minimal}. More precisely, we prove:
\begin{theorem}\label{Th-c-biharm-not-minimal-l=4}
Let $M_s^m$, $s\in(0,\pi/4)$, be a family of isoparametric hypersurfaces of degree $4$ in $\s^{m+1}$ with $m_1<m_2$, $m=2(m_1+m_2)$. Then there exists a unique $s^*\in(0,\pi/4)$ such that $M_{s^*}^m$ is an isoparametric $c$-biharmonic hypersurface in $\s^{m+1}$. Moreover, the $c$-biharmonic hypersurface $M_{s^*}^m$ is non-minimal.
\end{theorem}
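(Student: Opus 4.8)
The plan is to reduce the $c$-biharmonicity condition \eqref{eq-$c$-biharmonicity-isopar} to a polynomial equation in one variable and to count its solutions. For $\ell=4$ the multiplicities are $m_1,m_2,m_1,m_2$ and the principal curvatures \eqref{principal-curvatures} obey the ``antipodal'' relations $k_{i+2}=-1/k_i$, together with $k_2=(k_1-1)/(k_1+1)$. Setting
\[
p:=k_1+k_3=\cot s-\tan s=2\cot(2s)\,,
\]
the parameter $p$ runs bijectively over $(0,+\infty)$ as $s$ runs over $(0,\pi/4)$, and one computes $k_2+k_4=-4/p$. Hence $mH=m_1p-4m_2/p$, while $|A|^2$ and $\trace A^3$ are likewise rational in $p$, with denominators $p^{2}$ and $p^{3}$. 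Substituting into \eqref{eq-$c$-biharmonicity-isopar} and multiplying by $p^{3}$ yields a polynomial $R(p)$; since $H$ and $\trace A^3$ are odd in $p$ while $|A|^2$ is even, $R$ is an \emph{even} polynomial of degree $6$, so writing $w:=p^{2}\in(0,+\infty)$ we obtain $R(p)=\widetilde R(w)$ for a cubic $\widetilde R$. The isoparametric $c$-biharmonic members of the family then correspond exactly to the positive roots of $\widetilde R$.

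Existence is immediate: a direct computation gives $\widetilde R(0)=64\,m_2(2m_2^{2}-5m_2+6)>0$ and leading coefficient $-m_1(2m_1^{2}-5m_1+6)<0$, both signs being clear since the quadratic $2x^{2}-5x+6$ has negative discriminant. By the intermediate value theorem $\widetilde R$ has at least one positive root, which produces $s^{*}\in(0,\pi/4)$ with $M^m_{s^{*}}$ $c$-biharmonic. (As a check on the computation, for $m_1=m_2=1$ one obtains $\widetilde R(w)=-3(w-4)(w^{2}+16)$, whose only positive root $w=4$ recovers the minimal example $s=\pi/8$ of Theorem~\ref{th-isop-minimal}.)

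The main obstacle is uniqueness, that is, showing $\widetilde R$ has exactly one positive root. Writing $\widetilde R(w)=a_3w^{3}+a_2w^{2}+a_1w+a_0$, one has $a_3<0$ and $a_0>0$, so the product of the roots of $\widetilde R$ equals $-a_0/a_3>0$; hence either $\widetilde R$ has a single real root, which is then positive and the unique positive root, or $\widetilde R$ has three real roots with positive product. In the latter case, if in addition $a_2\le 0$ then the sum of the roots, $-a_2/a_3$, is $\le 0$, and a positive product together with a non-positive sum forces exactly one root to be positive. Now $a_2=4m_1f_2$ with $f_2=f_2(m_1,m_2)$ an explicit quadratic polynomial in the multiplicities, and an elementary estimate shows that $f_2>0$ can occur only for finitely many admissible pairs $(m_1,m_2)$, all of them with $m_1\le 4$; for each of these one checks directly that $\widetilde R$ still has exactly one positive root (in all the relevant cases $\widetilde R'$ has negative discriminant, so $\widetilde R$ is strictly decreasing). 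This yields the unique $s^{*}\in(0,\pi/4)$.

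Finally, $M^m_{s^{*}}$ is non-minimal. The minimal member of the family is the one with $H=0$, equivalently $w=w_{\min}:=4m_2/m_1$; at this value the factor $mHp$ of $R$ vanishes, so $\widetilde R(w_{\min})=-6\,p_{\min}^{3}\,\trace A^3|_{\min}$, and a short computation using $p_{\min}=2\sqrt{m_2/m_1}$ gives $\trace A^3|_{\min}=8(m_2^{2}-m_1^{2})/\sqrt{m_1m_2}\neq 0$ precisely because $m_1<m_2$; equivalently, by Proposition~\ref{prop-BNO-general}(b) the minimal member fails to be $c$-biharmonic. Hence $w_{\min}$ is not a root of $\widetilde R$, so $s^{*}$ does not correspond to the minimal hypersurface.
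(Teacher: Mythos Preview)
Your proposal is correct and follows the same overall strategy as the paper: reduce \eqref{eq-$c$-biharmonicity-isopar} to a cubic in one variable, obtain existence from the intermediate value theorem, verify non-minimality by computing $\trace A^3$ at the minimal member (the paper finds $[\tau_2^c]^\perp(\tilde s)=48(m_1^2-m_2^2)/\sqrt{m_1m_2}$, matching your value), and prove uniqueness via a case split.

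The differences lie in the implementation of the uniqueness step. The paper works with $y=\cos(4s)\in(-1,1)$ and shows the resulting cubic $A(y)$ has a single root there by analyzing $A''$: when $m_2-m_1\geq 3$ one has $A''<0$ throughout, so $A'$ is monotone; when $m_2-m_1\in\{1,2\}$ the second derivative changes sign once, and a direct computation gives $A'>0$ at its unique critical point. You instead use $w=p^2=4\cot^2(2s)\in(0,\infty)$ and argue with Vieta's relations on $\widetilde R$: $a_3<0$ and $a_0>0$ force the product of the roots to be positive, and when $a_2\le 0$ the sum of the roots is non-positive, ruling out three positive roots; the residual cases $a_2>0$ form a genuinely finite list (in fact only $(m_1,m_2)\in\{(1,2),(2,3),(3,4)\}$), each disposed of by checking that $\widetilde R'$ has negative discriminant. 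Your route trades the paper's second-derivative analysis for an elementary root-counting argument and yields a shorter exceptional list; both proofs leave the final finite verification as a routine check and are otherwise equivalent in content and rigor.
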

\begin{proof}
Direct substitution and simplification yields:
\[
H=\frac{ \Big((m_1+m_2) \cos (4s)+m_1-m_2 \Big)}{\sin(4s) (m_1+m_2)}\,
\]
\[
[\tau_2^c]^\perp=\frac{1}{32 \sin^3(s) \cos^9(s)\left(\tan ^2(s)-1\right)^3} \,A(s)\,,
\]
where
\begin{equation}\label{eq:polyAs}
\begin{aligned}
A(s)=&2 (m_1 - m_2) \big(12 (m_1^2 + m_2^2) + 8 m_1 m_2 - 33 ( m_1 + m_2) + 36\big)\\
   &+ \big(32 (m_1^3 + m_2^3) - 83 (m_1^2 + m_2^2) - 6 m_1 m_2 + 96 (m_1 + m_2)\big)\cos (4 s)\\
   &+2 (m_1 - m_2) \big(4 (m_1^2 + m_2^2) + 8 m_1 m_2 - 7 (m_1 + m_2) + 12\big) \cos (8 s)\\
   &+3 (m_1 + m_2)^2 \cos (12 s)\,.
\end{aligned}
\end{equation}

Now, it is easy to check that there exists a unique value $\tilde{s}\in (0,\pi/4)$ such that $H(\tilde{s})=0$, i.e.,
\[
\tilde{s}= \frac{1}{4} \cos
   ^{-1}\left(\frac{m_2-m_1}{m_1+m_2}\right)\,.
\]
Next, we compute
\begin{equation}\label{eq:limiti}
\begin{aligned}
\lim_{s\to 0^+} A(s) =& 32 m_1 (6 + m_1 (-5 + 2 m_1))>0\\
\lim_{s\to \pi/4^-} A (s)  =&-32 m_2 (6 + m_2 (-5 + 2 m_2))<0\,.
\end{aligned}
\end{equation}
From this it is easy to deduce that $\lim_{s \to 0^+}[\tau_2^c]^\perp(s)=-\infty$ and $\lim_{s \to \pi/4 ^-}[\tau_2^c]^\perp(s)=+\infty$. 

Therefore, there exists $s^*\in (0,\pi/4)$ such that $[\tau_2^c]^\perp(s^*)=0$. 

Finally, 
\[
[\tau_2^c]^\perp(\tilde{s})= \frac{48 (m_1^2-m_2^2)}{\sqrt{m_1 m_2}}   \neq 0
\]
which implies $s^*\neq \tilde{s}$. Therefore $M_{s^*}^m$ is $c$-biharmonic and non-minimal.
To end the proof it remains to show that $s^*$ is the unique solution of $A(s)=0$ in $(0,\pi/4)$. We rewrite $A(s)$ in \eqref{eq:polyAs} as a polynomial by setting $y=\cos(4s)$.  We obtain
\begin{equation}\label{eq:polyAy}
A(y)=a_o+a_1y+a_2y^2+a_3y^3\,,
\end{equation}
where
\[
\begin{aligned}
a_0 &= 4 (m_1 - m_2) \big(4(m_1^2+m_2^2)-13(m_1+m_2)+12\big)\\
a_1 &=  4 \big(8 (m_1^3+ m_2^3) -23(m_1^2+m_2^2)-6m_1 m_2+24(m_1+m_2)\big)\\
a_2 &= 4 (m_1 - m_2) \big(4(m_1^2+m_2^2)+8m_1 m_2-7(m_1+m_2)+12\big)\\
a_3 &= 12 (m_1 + m_2)^2\,.
\end{aligned}
\]
We have to show that the polynomial $A(y)$ admits only one root in $(-1,1)$. We deduce from \eqref{eq:limiti} that  $A(-1)<0$  and $A(1)>0$.
Then, it is enough to show that $A'(y)$ admits at most one zero in $(-1,1)$. It is easy to check that $A''(-1)<0$ and that $A''(y)$ is strictly increasing. Next a straightforward analysis shows that
\[
\begin{cases}  \text{(i)} \quad A''(1)<0 \quad \text{if}\; m_2=m_1+k, \; k\geq 3\\ 
\text{(ii)} \quad A''(1)>0 \quad \text{if}\; m_2=m_1+1 \;\text{or}\; m_2=m_1+2.
\end{cases}
\]
In the case (i), we have $A''(y)<0$ for all $y\in (-1,1)$ and so $A'(y)$ is strictly decreasing on $(-1,1)$. Therefore, $A'(y)$ admits at most one zero in $(-1,1)$.

In the case (ii), there exists a unique point $y_0=-a_2/(3a_3)\in(-1,1)$ such that $A''(y_0)=0$. Then $y_0$ is a minimum of $A'(y)$ and a direct computation shows that $A'(y_0)>0$. We thus conclude that $A'(y)>0$ for all $y\in(-1,1)$.
\end{proof}

 \begin{remark}
We point out that the analysis in the proof of Theorem~\ref{Th-c-biharm-not-minimal-l=4} has shown that \textit{minimal isoparametric hypersurface in $\s^{m+1}$ of degree $\ell=4$ with $m_1< m_2$ are not $c$-biharmonic}.
 \end{remark}
 
 \begin{remark}
In \cite[Theorem 5.1]{MR1868938} compact isoparametric Willmore hypersurfaces in
\(\s^{m+1}\) are characterized. Their structure is similar to isoparametric $c$-biharmonic hypersurfaces in spheres: for \(\ell=2\) they are represented by certain Clifford tori,
while for \(\ell=3,6\) they must be minimal. As in the case of isoparametric $c$-biharmonic hypersurfaces in spheres, \(\ell=4\) represents the richest case and allows for additional non-minimal solutions.
\end{remark}

\begin{remark}
We would like to point that Theorem~\ref{Th-c-biharm-not-minimal-l=4} highlights another interesting
difference between biharmonic and $c$-biharmonic isoparametric hypersurfaces in spheres. It is well-known that there do not exist proper biharmonic isoparametric hypersurfaces in $\s^{m+1}$ of degree \(\ell=4\), see \cite{MR2943022} for the details,
whereas Theorem \ref{Th-c-biharm-not-minimal-l=4} shows that there exist isoparametric $c$-biharmonic  hypersurfaces in this case.
\end{remark}
 
 \section{The proofs of Theorems~\ref{teo2-m=2-remark}, \ref{teo2-remark}, \ref{teo2-m=4-remark} and \ref{teo-global}}\label{sec-new-remark}
 
As a preliminary step we establish the following result.
 
 \begin{theorem}\label{teo1-remark}
Let $\varphi: M^m \hookrightarrow \mathbb{L}^{m+1}(\epsilon)$ be a $c$-biharmonic hypersurface. Assume that $M^m$ has constant scalar curvature and one of the following holds: {\rm (i)} $m=2$; {\rm (ii)} $m=3$; {\rm (iii)} $m=4$ and there are at most three distinct principal curvatures at any point. Then $\varphi: M^m \hookrightarrow \mathbb{L}^{m+1}(\epsilon)$  is isoparametric.
\end{theorem}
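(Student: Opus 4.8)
The first step is to specialise the $c$-biharmonic equation to a hypersurface $\varphi:M^m\hookrightarrow\mathbb{L}^{m+1}(\epsilon)$ whose scalar curvature is constant. Since for an isometric immersion $\bar\nabla d\varphi$ is the (normal valued) second fundamental form, the term $2\trace(\bar\nabla d\varphi)(\Ric(\cdot),\cdot)$ in $\tau_2^c$ contributes only to the normal component, while $d\varphi(\nabla\Scal)=0$. Hence the tangential part of $\tau_2^c$ coincides with the tangential part of the ordinary bitension field, so $c$-biharmonicity forces
\[
A(\grad H)=-\frac{m}{2}\,H\,\grad H\,,
\]
that is, $M^m$ is \emph{biconservative}. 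Writing $\Scal=m(m-1)\epsilon+m^2H^2-|A|^2$, the normal part of the equation becomes
\[
m\,\Delta H=\frac{5}{3}\,m\,H\,|A|^2-\frac{m}{3}\big(2m^2-11m+6\big)\,\epsilon\,H-\frac{2}{3}\,m^3H^3-2\,\trace A^3\,,
\]
and constancy of $\Scal$ gives the further relation that $|A|^2-m^2H^2$ is constant, i.e. $\grad|A|^2=2m^2H\,\grad H$.

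\noindent The heart of the argument is to prove that $H$ is constant. I would proceed by contradiction on the open set $\mathcal U=\{p\in M:(\grad H)(p)\neq0\}$. On $\mathcal U$ the biconservative equation says that $E_1:=\grad H/|\grad H|$ is a principal direction with principal curvature $k_1=-\frac{m}{2}H$; complete it to a local orthonormal frame $\{E_1,\dots,E_m\}$ of eigenvectors of $A$, $AE_i=k_iE_i$. Then $E_i(H)=E_i(k_1)=0$ for $i\ge2$ and $E_1(H)=|\grad H|\neq0$. Expanding the Codazzi equation $(\nabla_{E_i}A)E_j=(\nabla_{E_j}A)E_i$ in this frame yields $E_j(k_i)=(k_i-k_j)\langle\nabla_{E_i}E_i,E_j\rangle$ for $i\neq j$, together with $(k_j-k_k)\langle\nabla_{E_i}E_j,E_k\rangle=(k_i-k_k)\langle\nabla_{E_j}E_i,E_k\rangle$ for distinct $i,j,k$. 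Combining these with $\trace A=mH$ being constant along $E_2,\dots,E_m$, with $\grad|A|^2=2m^2H\grad H$, and with the normal equation above, one has to derive enough pointwise identities among the $k_i$ and the connection coefficients to conclude $\grad H\equiv0$ on $\mathcal U$, a contradiction. This is precisely where the dimension hypotheses enter: for $m=2$ and $m=3$, and for $m=4$ under the assumption of at most three distinct principal curvatures, these relations are rigid enough to pin everything down, whereas for larger $m$ the principal curvatures $k_2,\dots,k_m$ are insufficiently constrained. (Alternatively one may invoke the known fact that a biconservative hypersurface of constant scalar curvature is CMC exactly in these cases.)

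\noindent Once $H$ is constant, $|A|^2-m^2H^2=\mathrm{const}$ forces $|A|^2$ to be constant, and then the normal equation, now with $\Delta H=0$, forces $\trace A^3$ to be constant. Thus the first three Newton power sums $p_1=mH$, $p_2=|A|^2$ and $p_3=\trace A^3$ of the principal curvatures are constant. When $m=2$ or $m=3$, Newton's identities make every elementary symmetric function of $k_1,\dots,k_m$ constant, so the characteristic polynomial of $A$ has constant coefficients; since the principal curvatures are continuous and take only finitely many values, they are constant, i.e. $M^m$ is isoparametric. When $m=4$ with at most three distinct principal curvatures $a,b,c$, whose multiplicities are locally constant on an open dense subset of $M^m$, the system $p_1,p_2,p_3=\mathrm{const}$ becomes three polynomial equations in $a,b,c$ with isolated solutions, so the principal curvatures are locally constant there; a continuity and connectedness argument then promotes this to all of $M^m$.

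\noindent The step I expect to be the main obstacle is the proof that $H$ is constant in the second paragraph: the Codazzi bookkeeping is intricate, and one must also deal carefully with the possibility that the distinguished principal curvature $k_1=-\frac{m}{2}H$ is non-simple and with the behaviour of $M^m$ near the topological boundary of $\mathcal U$; a minor additional subtlety is the treatment of the multiplicity pattern in the case $m=4$ of the final paragraph.
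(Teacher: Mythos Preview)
Your overall strategy matches the paper's: reduce to biconservativity via the tangential part, prove $H$ constant, then read off that $|A|^2$ and $\trace A^3$ are constant from the normal equation and conclude isoparametricity. The paper, however, outsources the two hard steps to the literature rather than doing the Codazzi bookkeeping directly: for $m=2$ it quotes Caddeo--Montaldo--Oniciuc--Piu (a biconservative surface with $\grad H\neq0$ in a space form has $K=-3H^2+\epsilon$, contradicting constant $K$); for $m=3$ and $m=4$ it quotes Fu--Hong--Yang--Zhan; and for the final ``$p_1,p_2,p_3$ constant $\Rightarrow$ isoparametric'' step when $m=4$ it quotes the local rigidity result of de~Almeida--Brito--Scherfner--Weiss (also Chen--Li) that a CMC hypersurface of constant scalar curvature with at most three distinct principal curvatures in a space form is isoparametric.

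There is one genuine gap in your sketch that you should be aware of for case~(iii). Your ``alternative'' is not quite true as stated: for $m=4$, biconservativity together with constant scalar curvature does \emph{not} force $H$ constant. The Fu--Hong--Yang--Zhan theorem says that a biconservative hypersurface in $\mathbb{L}^5(\epsilon)$ with constant scalar curvature is either CMC \emph{or} an explicit non-CMC rotational hypersurface with $\Scal=12$ and principal curvatures $-k_1=k_2=k_3=k_4$. The paper then eliminates this rotational family by an explicit computation of the normal part of $\tau_2^c$, obtaining $\tau_2^c(\varphi)=16H\,\eta\neq0$. So if you carry out the Codazzi analysis by hand you will inevitably run into this family, and you must use the \emph{normal} $c$-biharmonic equation (not just biconservativity) to exclude it; your primary approach does list the normal equation among the ingredients, but you should flag that in the $m=4$ case it is doing essential work beyond the biconservative constraint.
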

\begin{proof} As a general fact we observe that if $\varphi: M^m \hookrightarrow \mathbb{L}^{m+1}(\epsilon)$ is $c$-biharmonic and has constant scalar curvature then,  from \eqref{eq-$c$-biharmonic-intro},  $\tau_2(\varphi)^{\top}=0$, that is $M^m$ is a biconservative hypersurface. Then, from \cite[Equation~(3.3)]{arXiv:2311.04493}, in this case the $c$-biharmonic tension field becomes
\begin{equation}\label{eq:taucbicconstscalar}
\tau_2^c(\varphi)= m \left(-\Delta H+\frac{H}{3}\left(5|A|^2-2 m^2 H^2-\epsilon\left(2 m^2-11 m+6\right)\right)-\frac{2}{m} \trace A^3\right) \eta\,.
\end{equation}

We shall give a proof of the proposition for the three cases (i), (ii) and (iii) separately. \\

Case (i). It is sufficient to prove that the surface $\varphi: M^2 \hookrightarrow \mathbb{L}^3(\epsilon)$ has constant mean curvature. In fact, a surface with constant scalar curvature has constant Gauss curvature and the conditions
\[
\begin{cases}
2 H=k_1+k_2={\rm constant}\\
K=\epsilon+k_1 k_2={\rm constant}
\end{cases}
\]
would imply that $M^2$ is isoparametric, that is $k_1$ and $k_2$ are constant. 
Now, it remains to prove that $H$ is constant. We argue by contradiction assuming that $H$ is non-constant. Then, since $\varphi: M^2 \hookrightarrow \mathbb{L}^3(\epsilon)$ is biconservative, from Theorem~3.1 of \cite{MR3180932} we have that $K=-3 H^2+\epsilon$ in a neighborhood of any point $p\in M^2$ where $\grad H(p)\neq 0$. This is a contradiction and so the proof is complete. \\

Case (ii). If  $\varphi: M^3 \hookrightarrow \mathbb{L}^4(\epsilon)$ is $c$-biharmonic and has constant scalar curvature, we have already observed that  $M^3$ is a biconservative hypersurface. Now, a result of Fu et al. (see \cite[Theorem~1.1]{MR4903596}) implies that $M^3$ is CMC. 
From
$$
\Scal=\epsilon(m-1) m+m^2 H^2-|A|^2=6 \cdot \epsilon+9 H^2-|A|^2
$$
we obtain that $|A|^2$ must be a constant too.
Next, from \eqref{eq:taucbicconstscalar} we deduce that
also $\trace A^3$ is constant.
In conclusion, $H, |A|^2$ and $\trace A^3$ are constant.
Now, let us denote by $k_1, k_2$ and $k_3$ the principal curvature functions. These functions are the roots of the polynomial 
$$
\begin{aligned}
P(k) & =\operatorname{det}\left(A-k I_3\right) \\
& =-\left(k-k_1\right)\left(k-k_2\right)\left(k-k_3\right)\,.
\end{aligned}
$$
A straightforward computation gives
$$
\begin{aligned}
-P(k)= & k^3-\left(k_1+k_2+k_3\right)   k^2+\left(k_1 k_2+k_2 k_3+k_1 k_3\right) k- 
 k_1   k_2   k_3 \\
= & k^3-\left(k_1+k_2+k_3\right)   k^2+\frac{1}{2}\Big\{\left(k_1+k_2+k_3\right)^2-\left(k_1^2+k_2^2+k_3^2\right)\Big\}   k \\
& -\frac{1}{3}  \Big\{k_1^3+k_2^3+k_3^3-\left(k_1+k_2+k_3\right)^3 \\
&+\frac{3}{2}\left(k_1+k_2+k_3\right)\left[\left(k_1+k_2+k_3\right)^2-\left(k_1^2+k_2^2+k_3^2\right)\right]\Big\}\\
=& k^3-3 H  k^2+\frac{1}{2}\left(9  H^2-|A|^2\right) k
 -\frac{1}{3}\Big\{\trace A^3-27  H^3+\frac{9}{2} H \left[9 H^2-|A|^2\right]\Big\}\,.
\end{aligned}
$$
Thus $P(k)$ is a polynomial with constant coefficients and therefore $k_1$, $k_2$ and $k_3$ have to be constant.\\

Case (iii). A result of Fu et al. (see \cite[Theorem~1.4]{MR4903596}) implies that $M^4$ is either CMC or contained in a certain biconservative non-CMC rotational hypersurface with scalar curvature equal to $12$ and two distinct principal curvatures $-k_1=k_2=k_3=k_4$. 

Now, we show by a direct computation that this non-CMC rotational hypersurface is not $c$-biharmonic. This hypersurface can be described explicitly as follows (see \cite[Section~5]{MR4903596}).

Let $\underline{\vartheta}=(\vartheta_1,\vartheta_2,\vartheta_3)$ be a set of local cordinates of $\s^3 \subset \R^4$. Then
\[
\varphi\left (s,\underline{\vartheta} \right )=\left (h_1(s)\underline{\vartheta}, h_2(s),h_3(s)  \right ) \in \R^6\,,
\] 
where $h_1^2+h_2^2+h_3^2 \equiv 1$, $h_1(s)>0$,  and we can assume that $s$ is the arc length for the profile curve, i.e.,
\[
h_1'^2+h_2'^2+h_3'^2 \equiv 1\,.
\]
The associated principal curvatures are
\begin{equation}\label{eq:k1k2wrth1}
k_1=\frac{h_1+h_1''}{\sqrt{1-h_1^2-h_1'^2}}\,,\quad
k_2=k_3=k_4=-\,\frac{\sqrt{1-h_1^2-h_1'^2}}{h_1}\,.
\end{equation}
Moreover, from \cite[Proposition 2.2]{MR4552081} the biconservativity condition becomes $-k_1=k_2=k_3=k_4$ which implies that $h_1$ satisfies the following ODE:
\begin{equation}\label{eq-ODE-h1}
h_1h_1''+h_1'^2+2 h_1^2-1=0\,.
\end{equation}

We observe that \eqref{eq-ODE-h1} admits the following prime integral
\[
h_1^2 h_1'^2+ h_1^4-h_1^2=C\,.
\]
This implies that 
\begin{equation}\label{eq-ODE-h1-integral}
 h_1'^2=\frac{C}{h_1^2}+1-h_1^2\,
\end{equation}
and a routine analysis shows that necessarily $C\in(-1/4, 0)$. Moreover, taking the derivative of \eqref{eq-ODE-h1-integral} we obtain:
\begin{equation}\label{eq-ODE-h1-second}
 h_1''=-\frac{C+h_1^4}{h_1^3}\,.
\end{equation}
Substituting \eqref{eq-ODE-h1-integral} in \eqref{eq:k1k2wrth1} we obtain
\begin{equation}\label{eq:HinC}
H=\frac{1}{2} k_2=-\frac{\sqrt{-C}}{2 h_1^2}\,.
\end{equation}

Next, we observe that
\begin{eqnarray}\label{eq-dati-non-c-biharmonic}
|A|^2&=&4 k_2^2 =16 H^2\,\\\nonumber
{\rm Tr}A^3&=&2 k_2^3 =16 H^3\,\\\nonumber
-\,\Delta H&=&H''+3 \frac{h_1'}{h_1}\,H' \,,
\end{eqnarray}
where for the last equality we have used the fact that the induced metric on $M^4$ is
\[
ds^2+h_1^2 \, g_{\s^3} \,.
\]
Now, inserting all these facts into the $c$-biharmonic tension field \eqref{eq:taucbicconstscalar} we find 
\[
\tau_2^c(\varphi)=4\left(H''+3 \frac{h_1'}{h_1} H'+8 H^3+2 H\right)\eta
\]
which, using \eqref{eq:HinC}, becomes 
\[
\tau_2^c(\varphi)=4\sqrt{-C} \left(\frac{C+h_1^3
   h_1''-h_1^4}{h_1^6}\right)\eta\,.
\]
Finally, using \eqref{eq-ODE-h1-second}, we obtain
\[
\tau_2^c(\varphi)=4\left( -\frac{2 \sqrt{-C}}{h_1^2}\right)\eta=16 H \eta\,
\]
and so it cannot vanish.
\vspace{3mm}

Now we can conclude the proof of the theorem. Indeed, 
it is known that a hypersurface $\varphi: M^m \hookrightarrow {\mathbb L}^{m+1}(\epsilon)$, $m\geq 4$, with constant scalar curvature and constant mean curvature, having at most three distinct principal curvatures, is isoparametric (see \cite{MR4088868} and also \cite{MR4812338}).
\end{proof}  
 
 \begin{proof}[Proof of Theorem~\ref{teo2-m=2-remark}] Let $\varphi:M^{2}\hookrightarrow\s^3$ be a $c$-biharmonic hypersurface with constant scalar curvature. Then, according to Theorem~\ref{teo1-remark}, the surface is isoparametric and for dimensional reasons the possible values of its degree are $\ell=1,2$. The isoparametric $c$-biharmonic surfaces have been classified in \cite{arXiv:2311.04493} and they are (i) and (ii) as in the statement of the theorem.
\end{proof}
 
 \begin{proof}[Proof of Theorem~\ref{teo2-remark}] Let $\varphi:M^{3}\hookrightarrow\s^4$ be a $c$-biharmonic hypersurface with constant scalar curvature. Then, according to Theorem~\ref{teo1-remark}, the hypersurface is isoparametric and for dimensional reasons the possible values of its degree are $\ell=1,2,3$. Now, if $\ell=1,2$ the isoparametric $c$-biharmonic hypersurfaces have been classified in \cite{arXiv:2311.04493} and they are (i) and (ii) as in the statement of the theorem. As for the case $\ell=3$ the conclusion follows readily from Proposition~\ref{pro-c-biharmonic-iso-l3}.
\end{proof}

\begin{proof}[Proof of Theorem~\ref{teo2-m=4-remark}] Let $\varphi:M^{4}\hookrightarrow\s^5$ be a $c$-biharmonic hypersurface with constant scalar curvature and at most three distinct principal curvatures at any point. Then, according to Theorem~\ref{teo1-remark}, the hypersurface is isoparametric and by assumption the possible values of its degree are $\ell=1,2,3$. Now, if $\ell=1,2$ the $c$-biharmonic isoparametric hypersurfaces have been classified in \cite{arXiv:2311.04493} and they are (i) and (ii) as in the statement of the theorem. As for the case $\ell=3$ we recall that the corresponding multiplicities satisfy $m_1=m_2=1,2,4,8$ and, since the dimension of $M$ is $m=4\neq 3 m_1$, $m_1\in\{1,2,4,8\}$, there exists no example of this degree.  
\end{proof}
\begin{proof}[Proof of Theorem~\ref{teo-global}] First, from
\begin{equation}\label{eq:Scal}
\Scal=12+16 H^2-|A|^2
\end{equation}
we deduce that also $|A|^2$ is constant. Now, simple inspection of \eqref{eq:taucbicconstscalar} shows that necessarily ${\rm Tr}A^3$ is constant as well. Therefore, we can apply \cite[Corollary 1.2]{MR4529031}, which we include here for the convenience of the reader using our notations, and conclude that $M^4$ is isoparametric. 

\begin{corollary}\label{Cor-Tang}\cite[Corollary 1.2]{MR4529031} Let $M^m$ ($m>3$) be a compact hypersurface in $\s^{m+1}$ and assume that:
\begin{itemize}
\item[(a)] $\Scal\geq 0$ 
\item[(b)] For any $q=1,\ldots,m-1$ the principal curvatures $k_1, \dots, k_m$ satisfy 
\[
\sum_{i=1}^m k_i^q ={\rm constant}\,.
\] 
\end{itemize}
Then $M^m$ is isoparametric. Moreover, if $M^m$ has $m$ distinct principal curvatures somewhere, then $\Scal= 0$.

\end{corollary}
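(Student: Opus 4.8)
The statement is the theorem of Tang and Yan quoted above, so the plan is to indicate how one proves it. The starting point is to convert hypothesis (b) into information on the elementary symmetric functions $\sigma_1,\dots,\sigma_m$ of the principal curvatures. By Newton's identities, the constancy of the power sums $p_q=\sum_i k_i^q$ for $q=1,\dots,m-1$ is equivalent to the constancy of $\sigma_1,\dots,\sigma_{m-1}$, whereas $\sigma_m=\det A$ is a priori uncontrolled. In particular $\Scal=m(m-1)+\sigma_1^2-p_2$ is automatically constant, so hypothesis (a) says that $\Scal$ is a \emph{nonnegative constant}. The characteristic polynomial of the shape operator then takes the form $P_p(t)=\det(t\,\Id-A_p)=R(t)+(-1)^m\sigma_m(p)$, where $R(t)=t^m-\sigma_1 t^{m-1}+\dots+(-1)^{m-1}\sigma_{m-1}t$ is a \emph{fixed} polynomial, independent of $p\in M$. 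Hence all the characteristic polynomials coincide up to the vertical shift $(-1)^m\sigma_m(p)$, and the entire problem reduces to proving that the single function $\sigma_m=\det A$ is constant: once this is known, $P_p$ has constant coefficients, the principal curvatures together with their multiplicities are globally constant, and a hypersurface with constant principal curvatures in a space form is isoparametric.

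The structural fact I would exploit is that $R'(t)=P_p'(t)$ does not involve $\sigma_m$ and is therefore a fixed polynomial, with fixed roots $r_1<\dots<r_{m-1}$ and fixed critical values; geometrically, the principal curvatures at $p$ are the abscissae of the intersections of the fixed graph $y=R(t)$ with the horizontal line $y=(-1)^{m+1}\sigma_m(p)$, so two principal curvatures can collide at a point only at one of the fixed values $r_j$, and on the open set where they are simple each $k_i$ is a smooth function of $\sigma_m$ alone. The heart of the matter, and the step I expect to be the main obstacle, is to prove that $\sigma_m$ is constant. Since $M$ is compact, $\sigma_m$ attains its extrema, and the plan is to run a maximum-principle argument based on the Cheng--Yau type operator $L_{m-1}u=\trace(P_{m-1}\nabla^2 u)=\operatorname{div}(P_{m-1}\nabla u)$, where $P_{m-1}$ is the $(m-1)$-st Newton transformation of $A$. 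Because the ambient space is a space form, the Codazzi equation forces $\operatorname{div}P_{m-1}=0$, so $L_{m-1}$ is self-adjoint; moreover $P_{m-1}A=\sigma_m\,\Id$ and $\trace P_{m-1}=\sigma_{m-1}$ are governed by the constants already available. I would then derive a Simons/Bochner-type identity expressing $\int_M L_{m-1}(\sigma_m)=0$ as the integral of a nonnegative gradient term (of type $P_{m-1}(\nabla\sigma_m,\nabla\sigma_m)$ or $|\nabla A|^2$) plus reaction terms built solely from $\sigma_1,\dots,\sigma_{m-1}$ and $\Scal$. The nonnegativity of $\Scal$ is exactly what is needed both to guarantee the ellipticity (positivity of $P_{m-1}$) on the relevant set and to fix the sign of the reaction terms, so that the identity forces the gradient term to vanish and hence $\sigma_m$ to be constant. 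Pinning down the ellipticity of $L_{m-1}$ and the precise sign bookkeeping in the Simons identity is the delicate technical core.

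Once $\sigma_m$ is constant we conclude, as explained, that $M$ is isoparametric. Finally, for the ``moreover'' assertion, suppose $M$ has $m$ distinct principal curvatures at some point; since $M$ is now isoparametric this holds everywhere, so the degree equals $m$, all multiplicities are $1$, and the principal curvatures are $k_i=\cot\!\big(s+(i-1)\pi/m\big)$ as in \eqref{principal-curvatures}. Using the classical identities $\sum_{i=1}^{m}\cot\!\big(s+(i-1)\pi/m\big)=m\cot(ms)$ and $\sum_{i=1}^{m}\csc^2\!\big(s+(i-1)\pi/m\big)=m^2\csc^2(ms)$, one gets $\sigma_1^2=m^2\cot^2(ms)$ and $p_2=m^2\csc^2(ms)-m$, whence
\[
\Scal=m(m-1)+\sigma_1^2-p_2=m(m-1)+m-m^2\big(\csc^2(ms)-\cot^2(ms)\big)=0,
\]
since $\csc^2-\cot^2\equiv 1$. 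Thus $\Scal=0$, which is precisely the desired conclusion.
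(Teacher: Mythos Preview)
The paper does not prove this corollary at all: it is quoted verbatim from Tang--Yan \cite{MR4529031} and invoked as a black box inside the proof of Theorem~\ref{teo-global}. There is therefore nothing to compare your argument against in the present paper.

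That said, a few comments on your sketch. The reduction via Newton's identities to the constancy of $\sigma_1,\dots,\sigma_{m-1}$, the observation that $\Scal$ is then a nonnegative constant, and the rewriting of the characteristic polynomial as $R(t)+(-1)^m\sigma_m(p)$ with $R$ fixed are all correct and are indeed the right way to frame the problem. Your computation for the ``moreover'' clause is clean and correct: with $k_i=\cot\!\big(s+(i-1)\pi/m\big)$ one gets $\sigma_1^2-p_2=m^2\cot^2(ms)-m^2\csc^2(ms)+m=-m^2+m$, hence $\Scal=0$.

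The gap is in the main step: you announce a Simons/Cheng--Yau type identity for $L_{m-1}\sigma_m$ with a favourable sign controlled by $\Scal\geq0$, but you do not write it down, and you explicitly flag the ellipticity of $L_{m-1}$ and the sign bookkeeping as ``the delicate technical core''. This is precisely where all the work in \cite{MR4529031} lies; the ellipticity of $L_{m-1}$ is \emph{not} automatic (the eigenvalues of $P_{m-1}$ are $\prod_{j\neq i}k_j$, which need not be nonnegative), and the way $\Scal\geq0$ enters is subtler than a single sign in a Bochner formula. So what you have is a correct outline of the strategy, not a proof; since the present paper only cites the result, that is arguably appropriate, but you should be aware that the part you left open is the substance of the theorem.
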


Thus, it only remains to check that for all the listed examples we have $\Scal\geq 0$. This is just a case by case verification using \eqref{eq:Scal}. Indeed, if $M^4$ is the totally geodesic hypersphere, $\Scal=12$. If $M^4$ is the hypersphere $\s^4\left( \frac{\sqrt 3}{2}\right)$, then
\[
H^2= \cot^2\left(\frac{\pi}{3}\right)=\frac{1}{3}\,\,,\quad |A|^2=4 \cot^2\left(\frac{\pi}{3}\right)=\frac{4}{3}\,.
\]
Thus, in this case $\Scal=16$. 

Next, we examine the generalized Clifford tori. As for the case $\s^2(1/\sqrt 2)\times \s^2(1/\sqrt 2)$ we have:
\[
H=0\,,\quad |A|^2=2 \cot^2 \left( \frac{\pi}{4}\right )+2 \cot^2 \left( \frac{3\pi}{4}\right )=4
\]
and so $\Scal=8$. As for the second torus $\s^2(r_1)\times \s^2(r_2)$ we have: 
\[
|A|^2=2 \,\frac{r_2^2}{r_1^2}+2\, \frac{r_1^2}{r_2^2}=8\,
\]
from which we deduce $\Scal=4+16 H^2 >0$.

In the case of $\s^1(r_1)\times \s^3(r_2)$ we provide an estimate which is sufficient for our purposes. More precisely, using the explicit expression \eqref{eq:P3(T)}, we observe that
\[
P_3(1)=-8 \quad {\rm and}\quad P_3(2)=1
\]
which imply $1<T^*<2$. Next, we estimate 
\[
|A|^2=\frac{r_2^2}{r_1^2}+3\, \frac{r_1^2}{r_2^2}=\frac{1}{T^{*}}+3\,T^{*} \leq 7\,.
\]
From this and \eqref{eq:Scal} it is immediate to conclude that ${\rm Scal}\geq 5+16 H^2 >0$.

Finally, we examine the minimal isoparametric hypersurface of degree $\ell=4$. Using \eqref{principal-curvatures} with $s=\pi/8$ we obtain
\[
H=0\,, \quad |A|^2= \sum_{i=1}^4 \cot^2 \left( \frac{\pi}{8} +\frac{(i-1)\pi}{4} \right )=12\,.
\]
Thus, consistently with the last sentence of Corollary~\ref{Cor-Tang}, $\Scal=0$ in this case and the proof is completed.
\end{proof}

  \section{The proof of Theorem~\ref{teo-rigidity-totally-geodesic}}
Let $\varphi:M^{m}\hookrightarrow {\mathbb L}^m(\varepsilon)\times \mathbb{R}$  be a totally umbilical hypersurface. Then the shape operator satisfies $A=H \Id$ and it follows that $|A|^2=m H^2$ and $\trace A^3=m H^3$. Under these conditions, the tangential and normal components of $\tau_2^c$, given by \eqref{BHP1} and \eqref{BHP2}, become
\begin{equation}\label{eq-cbh-product-umbilical-tangent}
\left[\tau_2^c\right]^{\top}=-\frac{1}{3} m\left(m+8\right) H \grad H-\frac{2}{3}(m-1)(3m+2) \varepsilon H \cos\alpha\, T\,,
\end{equation}
\begin{eqnarray}\label{eq-cbh-product-umbilical-normal}
\left[\tau_2^c\right]^{\perp} &=&\big\{-m \Delta H+\frac{2}{3} m(m-1)(3-m)\varepsilon H+\frac{1}{3}  (m-1)(7 m-12) \varepsilon H\sin^2 \alpha\nonumber\\
&&-\frac{1}{3} m\left(2m^2-5m+6\right) H^3\big\}\eta\,.
\end{eqnarray}

Moreover, under the assumption that $A=H \Id$ the Codazzi equation \eqref{eq:Codazzieq} becomes
\begin{eqnarray}
\label{eq:Codazzieq-umbilical} 
[X(H)+\varepsilon\cos\alpha \langle X,T\rangle] Y-[Y(H) +\varepsilon\cos\alpha(\langle Y,T\rangle]X=0\,.
\end{eqnarray}
Since the dimension of $M$ is $m\geq 2$, for any tangent vector field $X$, choosing $Y$ so that $X,Y$ are linearly independent, we obtain
 \begin{eqnarray}
\label{eq:Codazzieq-umbilical2} 
X(H)+\varepsilon\cos\alpha \langle X,T\rangle =0\,,\quad \forall X\in C(TM)\,.
\end{eqnarray}
Next \eqref{eq:Codazzieq-umbilical2} implies 
 \begin{eqnarray}
\label{eq:Codazzieq-umbilical3} 
\grad H= -\varepsilon\cos\alpha T \,
\end{eqnarray}
that substituted in \eqref{eq-cbh-product-umbilical-tangent} yields 
$$
(5m^2-10m-4) H \grad H=0\,.
$$
Since $5m^2-10m-4$ does not have integer solutions we conclude that $H$ is constant, eventually zero. In particular, from \eqref{eq:Codazzieq-umbilical3}, the possible cases are $\varepsilon=0$, $\cos\alpha=0$ and $T=0$. If $\varepsilon=0$, then the vanishing of  \eqref{eq-cbh-product-umbilical-normal} implies immediately that $H=0$. If $\cos\alpha=0$, then $M^m$ is a cylinder, which is totally umbilical if and only if it is totally geodesic. Finally, if $T$ vanishes, then $M$ is a slice ${\mathbb L}^m(\varepsilon)\times \{t_0\}$ and so it is totally geodesic. \\

%

   \end{document}